\newif\ifPreprint \Preprinttrue
\newif\ifSubmission \Submissionfalse
\Crefname{algorithm}{\ac{nic}}{}
\patchcmd{\@settitle}{\uppercasenonmath\@title}{\scshape\large}{}{}
\patchcmd{\@setauthors}{\MakeUppercase}{\scshape\normalsize}{}{}
\newcommand{\abbr}[1][abbrev]{#1.\xspace}
\newcommand{\eg}{\abbr[e.g]}
\newcommand{\ie}{\abbr[i.e]}
\newcommand{\wrt}{\abbr[w.r.t]}
\newcommand{\define}{\mathrel{{\mathop:}{=}}}
\newcommand{\field}{\mathbb}
\newcommand{\naturals}{\field{N}}
\newcommand{\rationals}{\field{Q}}
\newcommand{\reals}{\field{R}}
\newcommand{\integers}{\field{Z}}
\newcommand{\NN}{\naturals}
\newcommand{\QQ}{\rationals}
\newcommand{\RR}{\reals}
\newcommand{\fcdot}{\,\cdot\,}
\newcommand{\fcarg}[1]{\def\fc@rg{#1}\ifx\fc@rg\empty\fcdot\else\fc@rg\fi}
\newcommand{\abs}[1]{\lvert\fcarg{#1}\rvert}
\newcommand{\norm}[2][]{\lVert\fcarg{#2}\rVert\ifx#1\empty\else_{#1}\fi}
\newcommand{\Norm}[2][]{\left\lVert#2\right\rVert\ifx#1\empty\else_{#1}\fi}
\newcommand{\sset}[1]{\{#1\}}
\newcommand{\SSet}[1]{\left\{#1\right\}}
\newcommand{\defset}[3][\defsep]{\sset{ #2#1#3 }}
\newcommand{\Defset}[3][\, \middle| \,]{\SSet{ #2#1#3 }}
\newcommand{\eps}{\varepsilon}
\newtheorem{theorem}{Theorem}[section]
\newtheorem{lemma}[theorem]{Lemma}
\newtheorem{definition}[theorem]{Definition}
\newtheorem{remark}[theorem]{Remark}
\newtheorem{example}[theorem]{Example}
\crefname{assumption}{assumption}{assumptions}
\newcommand{\vect}[1]{\mathbf{#1}} 
\newcommand{\vx}{\vect{x}}
\newcommand{\vy}{\vect{y}}
\newcommand{\va}{\vect{a}}
\newcommand{\vb}{\vect{b}}
\newcommand{\vr}{\vect{r}}
\newcommand{\sol}{\vx^*}
\newcommand{\veczero}{\textup{\textbf{0}}}
\newcommand{\T}{\mathsf{^T}}
\newcommand{\incumbent}{\vx^k}
\newcommand{\subincumbent}{\vx^{i_j}}
\newcommand{\finseries}{(\vx^i)_{i = 0,\dots,k-1}}
\newcommand{\infseries}{(\vx^i)_{i \in \NN_0}}
\newcommand{\infsubseries}{(\vx^{i_j})_{j \in \NN_0}}
\newcommand{\cutlhs}[1]{\frac{\Vert \vr(\vx^{#1})_+ \Vert}{L}}
\newcommand{\packnum}{M(\delta/L)}
\newcommand{\vol}{\mathrm{vol}}
\DeclareRobustCommand\xxx{%
	\xxx\thinspace}
\begin{document}

\title[Norm-induced Cuts]{Norm-induced Cuts: \\ Outer Approximation for Lipschitzian Constraint Functions}

\author[A. Gö{\ss}, A. Martin, S. Pokutta, K. Sharma]%
{Adrian Gö{\ss$^\ast$}\orcidlink{https://orcid.org/0009-0002-7144-8657}, 
	Alexander~Martin\orcidlink{https://orcid.org/0000-0001-7602- 3653}, 
	Sebastian~Pokutta\orcidlink{https://orcid.org/0000-0001-7365-3000}, 
	Kartikey~Sharma\orcidlink{https://orcid.org/0000-0001-6736-4827}}

\address{$^\ast$Corresponding author}
\address[A. Gö{\ss}, A. Martin]{%
  University of Technology Nuremberg,
  Analytics \& Optimization Lab,
  Dr.-Luise-Herzberg-Str.~4,
  90461~Nürnberg,
  Germany}
\email{$\{$adrian.goess,alexander.martin$\}$@utn.de,pokutta@zib.de,\\kartikeyrinwa@gmail.com}

\address[S. Pokutta, K. Sharma]{%
	Zuse Institute Berlin,
	Mathematical Algorithmic Intelligence,
	Takustr.~7,
	14195~Berlin,
	Germany}

\begin{abstract}
  In this paper, we consider a finite-dimensional optimization problem minimizing a continuous objective on a compact domain subject to a multi-dimensional constraint function. 
For the latter, we assume
 the availability of a global Lipschitz constant.
In recent literature, methods based on non-convex outer approximation are proposed for tackling one-dimensional equality constraints
that are Lipschitz with respect to the maximum norm.
To the best of our knowledge, however, there does not exist a non-convex outer approximation method for a general problem class. 
We introduce a meta-level solution framework to solve such problems and tackle the underlying theoretical foundations.
Considering the feasible domain without the constraint function as manageable, our method relaxes the multidimensional constraint and iteratively refines the feasible region by means of norm-induced cuts, 
relying on an oracle for the resulting subproblems.
We show the method's correctness and investigate the problem complexity.
In order to account for discussions about functionality, limits, and extensions, we present computational examples including illustrations.

\end{abstract}

\keywords{Global optimization,
Lipschitz optimization,
Outer Approximation,
Derivative-free optimization
\hspace{-2mm}
}
\subjclass[2010]{90C26, 
90C30, 
90C56, 
90C60
}

\maketitle\
\section{Introduction}
\label{sec:introduction}

We present an algorithm framework addressing the following general form of an optimization problem.
\begin{equation}
	\label{original}
	\begin{aligned}
		\min_{\vx}\ & f(\vx) \\
		\text{s.t.}\ & \vr(\vx) \leq \veczero, \\
		& \vx \in \Omega.
	\end{aligned}\tag{P}
\end{equation}
We assume $\Omega$ to be a non-empty, compact subset of the finite-dimensional vector space $X$ over a field $F$, \ie, $\Omega \subset X \cong F^n$, $n \in \naturals$. 
The space $X$ is associated with a norm $\norm[X]{\, \cdot\,}$, leaving $(X, \norm[X]{\,\cdot\,})$ as a normed space. 
The function $\vr : \Omega \rightarrow \reals^m$, $m \in \naturals$, 
is assumed to be Lipschitz continuous on $\Omega$ with a known Lipschitz constant $L > 0$, \ie, for any $\vx,\, \vy \in \Omega$, it holds that
\begin{equation*}
	\norm[\reals^m]{\vr(\vx) - \vr(\vy) } \leq L \norm[X]{\vx - \vy},
\end{equation*}
with $\norm[\reals^m]{\,\cdot\,}$ denoting a monotonous norm in $\reals^m$. 
We mention that $\veczero \in \reals^m$ denotes the vector of $m$ zeros and the inequality in \labelcref{original} is to be interpreted component-wise.
Lastly, the objective function $f: \Omega \rightarrow \reals$ is assumed to be continuous on $\Omega$.

Note that the minimal value of a Lipschitz constant~$L$ is directly connected to the specific norms. 
Further, the posed problem statement requires no explicit form of $\vr$ per se, but only 
the evaluation of $\vr$ at any point $\vx \in \Omega$. 
In addition, problem~\labelcref{original} is presented for an arbitrary field $F$, as the general formulation of our method allows for it.
As mentioned below, the field can even be substituted (partially) by a ring such as the integers $\integers$, extending the applicability to mixed-integer problems.

Optimization problems of type~\labelcref{original} occur in several fields. 
In bilevel optimization, for instance, the objective and constraints of the lower level can be summarized in the optimal-value function.
Typically, there exists no explicit representation of this function, but under certain assumptions
it is globally Lipschitz continuous and the respective Lipschitz constant is computable, \eg, compare~\cite{Gruebel2023} or \cite[Remark 2]{SchmidtSirventWollner2019}. 
Similarly, for optimization problems featuring differential equations as constraints, \eg, as in optimal gas network control~\cite{Koch2015}, 
recent work advertises the use of Neural Networks (NNs) to replace the implicit constraints~\cite{Raissi2019}, rendering it a surrogate-based optimization problem. 
Although providing an explicit constraint instead of an implicit one, a NN typically shows a deeply nested nonlinear structure, which makes a direct inclusion into the problem formulation inadvisable.
The NN may even lack differentiability when so-called ReLU functions are involved.
However, in~\cite{fazlyab-robustnessNNs-2022,pauli-NNLipschitzBounds-2022} the authors present ways to compute global Lipschitz constants for NNs, turning it into a problem of type~\labelcref{original}.
A more detailed account is given below.

In our problem formulation, it is assumed that we can evaluate the constraint function~$\vr$ without having an explicit representation.
Such problems are tackled in the field of \textit{derivative-free optimization}.
Here, the applicability of any particular approach also depends on the structure of~$\Omega$.
For example, if $\Omega$ is defined by linear constraints only, a directional direct-search approach can be leveraged
which applies a Lagrangian reformulation; see~\cite{Lewis1999}.
For the case when non-smooth constraints define $\Omega$, filtering techniques as proposed in~\cite{Audet2004}
may be more suitable. 
Further, in terms of direct search, a well-known method called \textit{mesh adaptive direct search (MADS)}~\cite{Audet2006}
is capable of handling general constraints in $\Omega$, too.
Alternatively, \citeauthor{Berghen2005}~\cite{Berghen2005}
proposes a trust-region interpolation-based approach to tackle such problems. 
For a general overview on constrained derivative-free optimization, we refer to \cite[Chapter~13]{Conn2009}.

However, assuming no further information about the involved functions can be quite limiting. 
For instance, even when minimizing a continuous function over a simple box-constraint domain, in a worst-case situation, a finite bound on the number of iterations to find an optimal solution cannot be expected, see, \eg, \cite[Chapter~6]{Vavasis1991}.
Hence, we assume the availability of more information, in particular, that the function $\vr$ is Lipschitz continuous and that we know a corresponding Lipschitz constant.
Thus, we touch the field of \textit{Lipschitz optimization}.
Traditionally, problems with univariate, Lipschitzian objectives are tackled by saw-tooth cover methods, see~\cite{Piyavskii1972} for the source. 
More general problems, similar to \labelcref{original}, are solved by branch-and-bound schemes or outer approximation, see~\cite[Chapter~XI]{Horst2013} for an overview and below for a more extensive assignment.

\subsection{Contribution}
We propose an outer approximation approach that generalizes classical cutting plane methods. 
Relaxing the constraint function $\vr$, our framework iteratively refines the feasible region by means of norm-induced cuts.
Each cut bounds the distance, \ie, the norm of the difference, of the variable $\vx$ to the solution of the current subproblem.
Such a solution is obtained by oracle calls. 
This already reveals the twofold nature of the presented general problem statement.
On the one hand, there exists no approach in the field of Lipschitz optimization to handle the posed general problem class, to the best of our knowledge. 
The method presented is not even directly restricted to a particular norm, as long as the respective Lipschitz constant is available and the norm is monotonic. 
On the other hand, the resulting subproblems are not explicitly constructed, but the existence of an oracle to solve them is assumed.

Based on this, our contribution includes (i) the introduction of a meta-level method tackling a problem involving Lipschitzian constraints with known Lipschitz constant in the generality of~\labelcref{original}, see~\Cref{sec:algorithm}.
(ii) We investigate the approach in theory in terms of correctness (\Cref{sec:convergence}) as well as under practical assumptions about infeasibility and the problem complexity (\Cref{sec:termination}).
(iii) Additionally, we visualize the functionality of the method, its limits considering local solutions of the subproblems, and the ambiguity resulting from available Lipschitz constants for components of the constraint function, compare~\Cref{sec:examples}. 

In order to account for our assumptions, we discuss the computation of global Lipschitz constants in~\Cref{subsec:computing-lipschitz-constants} and argue for the solvability of the subproblems in~\Cref{subsec:solving-subproblems}.
In~\Cref{sec:conclusion}, we summarize the presented content and point to future research directions.

\subsection{Related Work}
\label{subsec:related-work}

\subsubsection{Lipschitz Optimization}
\label{subsubsec:related-work-Lipschitz-opt}

Lipschitz optimization methodology comes into play when solving optimization problems, for which the involved functions are not necessarily known, but there exist assumptions about their smoothness. 
Specifically, one assumes that these functions are Lipschitz continuous and the Lipschitz constant is known.

It has been a popular field of research, especially in areas with complex dynamics such as chemical plants~\cite{floudas2013deterministic} or for settings in which even evaluating a function requires extensive numerical simulations such as hyperparameter calibration~\cite{ahmed2020combining}.
Most existing work in this domain has focused on Lipschitz continuous objectives but with tractable constraints~\cite{sergeyev2017deterministic, bagirov2014introduction} by leveraging techniques such as domain partitioning~\cite{liuzzi2010partition, paulavivcius2013simplicial} or guided search with sampling~\cite{malherbe2017global}. 

Recent works expand the use of Lipschitz optimization to the setting where there are Lipschitz assumptions on the constraints.
These include the use of penalty-based techniques to convert the constrained optimization problem into an unconstrained one~\cite{di2016direct} and index-based methods which iteratively sample and evaluate new points based on the number of constraints violated at existing points~\cite{strongin2020global}.
Other approaches leverage available (approximate) Lipschitz constants for the constraints by relaxing them and iteratively re-incorporating this information. 
For example, the method proposed in~\cite{SchmidtSirventWollner2019} approximates the original constraint function by a parallelepipedal shape and refines this in every iteration.
Similarly, the approach of \cite{SchmidtSirventWollner2022} entirely neglects the constraint function at first and improves a non-convex outer approximation of the remaining domain step by step.
This is achieved by bounding the distance to the found intermediate points in terms of the infinity norm and re-modeling it in terms of mixed-integer programming techniques.
Those strategies are applied in~\cite{Gruebel2023,Molan2023}

The algorithmic scheme presented in~\cite{SchmidtSirventWollner2022} tackles a simplified version of problem~\labelcref{original} limited to polyhedral constraints on a mixed-integer domain in the subproblem along with one-dimensional implicit Lipschitz constraints.
In this sense, our work can be seen as an extension, respecting vector-valued functions and general norms.
This is discussed in even more detail in~\Cref{rem:distinction-to-martin}.

There also exist papers that are not explicitly categorized as a part of Lipschitz optimization, since they require (sub)gradient information, but that are closely related such that we mention them here briefly.
The extended cutting plane methods for convex and special non-convex problems relax the original nonlinear constraints and iteratively refine the resulting relaxation~\cite{Westerlund1995,Westerlund1998}.
In this sense, our approach can be seen as a generalization of this as well.
A similar strategy is applied by~\cite{Gugat2018}, which can be seen as the origin for the results above~\cite{SchmidtSirventWollner2019,SchmidtSirventWollner2022,Gruebel2023,Molan2023}.
The most recent references~\cite{Gruebel2023,Molan2023} apply tailored algorithms for Lipschitz optimization to Neural Networks constraints, and more generally, to surrogate-based optimization problems.
This gives a glimpse of the broad applicability of this field, which is discussed more extensively in the following.

\subsubsection{Surrogate-based Optimization}
\label{subsubsec:related-work-surrogate-opt}

As stated, our approach is particularly well suited for nonlinear optimization problems which use surrogate models as constraints along with simpler, \eg, convex, domains.
Such a domain can represent an additional source of uncertainty which increases the complexity of the problem. 
For instance, linear constraints can be associated with ellipsoidal uncertainty sets which are equivalent to second-order cone programming (SOCP) problems.
This is particularly relevant for engineering design applications where you need to manage implementation uncertainty along with complex nonlinear response behaviors~\cite{bertsimas2010robust,ozturk2019optimal}.
Examples include aircraft routing problems~\cite{bartholomew2002} and integrated design optimization~\cite{egea2007}, for instance, which are typically handled in derivative-free optimization.
The latter can be tackled by the present method if the solution operator to a differential equation is Lipschitz with a known constant, which is a common assumption. 
We refer the interested reader to~\cite{boukouvala2016} for a review and exemplary problems.

Recently, it has also become popular to incorporate neural networks and other trained machine learning models as constraints in optimization problems~\cite{mistry2021,tsay2021,kronqvist2021}. 
This has led to the development of software to do so efficiently~\cite{ceccon2022,turner2023}. 
In such cases, the complexity of the machine learning model has a significant effect on problem computation.
Our approach can be used as an alternative to address these problems by replacing the machine learning model with iterative cuts, as in many cases it is possible to compute a Lipschitz constant for the machine learning approach, see~\Cref{subsec:computing-lipschitz-constants}.

\section{Norm-induced Cut Method}
\label{sec:algorithm}

In this section, we formulate a method to tackle problem \labelcref{original}. 
From now on, we will omit the subscripts for each norm, where it is unambiguous.  
The method iteratively solves a relaxed problem and adds cuts to enforce feasibility. 
These cuts are determined by a point in $\Omega$, the norm, and the function $\vr$ with its Lipschitz constant~$L$. 
We define them formally below.

\begin{definition}
	\label{def:cut}
	For $\vect{z} \in \reals^m$, let $\vect{z}_+ = (\max(z_1, 0), \dots, \max(z_m, 0))\T$ denote the component-wise maximum of the elements of $\vect{z}$ and 0.
	Now, let $\vy \in \Omega$ such that $\vr(\vy) \nleq \veczero$, \ie, there exists at least one positive component of $\vr$ at $\vy$, and let $L > 0$ define the Lipschitz constant of function $\vr$. 
	Then, for $\vx \in \Omega$, we call the inequality
	\begin{equation}
		\label{eq:def-nic}
		\frac{\norm{\vr(\vy)_+}}{L} \leq \norm{\vx - \vy}, \tag{NC}
	\end{equation}
	a \emph{norm-induced cut in $\vy$}.
\end{definition}
Whenever referring to the \emph{radius of a norm-induced cut}, we consider the constant term $\norm{\vr(\vy)_+}/L$ of~\labelcref{eq:def-nic} for a given $\vy \in \Omega$.
On the one hand, if $\vy$ is feasible to the overall problem~\labelcref{original}, then $\vr(\vy)_+ = \veczero$ and $\norm{\vr(\vy)_+} = 0$.
On the other hand, if $\vy \in \Omega$ is not feasible to problem~\labelcref{original}, but $\vx \in \Omega$ is, 
then~\labelcref{eq:def-nic} describes the minimum distance $\vx$ must have to $\vy$.
Geometrically, this is ensured by removing a norm-induced ``ball'' around $\vy$ from the feasible region~$\Omega$.

This distance or the radius of the ball equals the radius of the norm-induced cut and includes the violation of $\vr(\vx) \leq \veczero$ at the current solution and the Lipschitz constant~$L$. 
Informally, having a greater violation or a smaller $L$ increases the radius of the norm-induced cut and, thus, leads to a larger ``excluded area'' in the next iteration. 

Now, let the feasible set of \labelcref{original} be denoted by
\begin{equation*}
	Q \coloneqq \defset{\vx \in \Omega}{ \vr(\vx) \leq \veczero} \subseteq \Omega.
\end{equation*}
In the following, we define a family of sets that involve norm-induced cuts and which are relaxations to $Q$.

\begin{definition}
	\label{def:sets}
	Let $k \in \naturals$ be a positive integer and $\finseries \subseteq \Omega$ be a sequence such that $\vr(\vx^i) \nleq \veczero$ for all $i = 0, \dots, k-1$. 
	Then, we define 
	\begin{equation*}
		Q_k \coloneqq \Defset{\vx \in \Omega}{\forall i \in \sset{0, \dots, k-1}: \frac{\norm{\vr(\vx^i)_+}}{L} \leq \norm{\vx - \vx^i}},
	\end{equation*}
	the \emph{$k$th relaxed set} of $Q$.
	It is obtained from $\Omega$ by relaxing the inequality constraint $\vr(\vx) \leq \veczero$ and adding norm-induced cuts for each element $\vx^i$ of the sequence for a total of $k$ cuts. 
	For $k = 0$, we naturally define $Q_0 \coloneqq \Omega$. 
\end{definition}
In \Cref{lem:relax}, we show that these $Q_k$ are indeed relaxations of $Q$.
The resulting family of relaxed problems is denoted as
\begin{equation}
	\label{kth-relaxed-prob}
	\tag{$\mathrm{P}_k$}
	\min_{\vx \in Q_k} f(\vx),
\end{equation}
for $k \in \naturals_0$. 
We interpret $k$ as the iteration index of our method.
The following remark shows a connection between two consecutive relaxed problems ($\mathrm{P}_k$) and ($\mathrm{P}_{k+1}$), in particular, between their feasible sets $Q_k$ and $Q_{k+1}$, respectively.

\begin{remark}
	\label{rem:consec_sets}
	Let $k \in \naturals_0$ and let $(\vx^i)_{i=0,\dots,k} \subseteq \Omega$ be the sequence to define the $k$th and the $(k+1)$th relaxed set $Q_k$ and $Q_{k+1}$, respectively. 
	Then, $Q_{k+1}$ is given by
	\begin{equation*}
		Q_{k+1} = Q_k \cap \Defset{\vx \in \Omega}{\frac{\norm{\vr(\incumbent)_+}}{L} \leq \norm{\vx - \incumbent}}.
	\end{equation*}
\end{remark}

In other words, the $(k+1)$th relaxed set results from the $k$th relaxed set by adding the \textit{norm-induced cut} in $\incumbent$, \ie,
\begin{equation*}
	\frac{\norm{\vr(\incumbent)_+}}{L} \leq \norm{\vx - \incumbent}.
\end{equation*}

Now, we are able to state our method for solving problem \labelcref{original}. 
It is called the \emph{\acl{nic} (\labelcref{alg:nic})}, following its main functionality of iteratively adding norm-induced cuts. 

In particular, it starts by relaxing the inequality constraint in \labelcref{original}, $\vr(\vx) \leq \veczero$, and solves the resulting relaxed problem \labelcref{kth-relaxed-prob} for $k = 0$. 
It is assumed that we have a suitable method to solve \labelcref{kth-relaxed-prob} for all $k \in \naturals_0$, which is discussed in more detail in~\Cref{subsec:solving-subproblems}.
If the relaxed problem is infeasible, so is the original one, and an appropriate message is returned. 
Otherwise, the method checks if the solution of the relaxed problem $\vx^{0}$ is also feasible for the original problem by evaluating function $\vr$ at $\vx^0$ and checking for the inequality. 
If it is satisfied, the $\vx^{0}$ is returned as a solution to the original problem. 
Otherwise, a norm-induced cut in the current point $\vx^0$ is added. 
This defines the set~$Q_1$. 
The resulting problem ($\mathrm{P}_1$) is then solved in the next iteration.

In conclusion, the method adds norm-induced cuts as long as the intermediate solutions are infeasible for the original problem \labelcref{original}. 
Below we give a formal description of the \labelcref{alg:nic} method in pseudo code.
We point out that any specific stopping criterion, \eg, approximated satisfaction of constraints, is omitted,
as we examine the method's theoretical behavior for an infinite number of iterations in \Cref{sec:convergence}.
The analysis with regard to practical application, which is satisfied with approximate solutions, is presented in~\Cref{sec:termination}.

\renewcommand{\thealgorithm}{NIC}
\begin{algorithm}
	\floatname{algorithm}{\acf{nic}}\caption{ }
	\label{alg:nic}
	\begin{algorithmic}[1]
		\REQUIRE Set $\Omega$, function $\vr$, Lipschitz constant $L$
		\ENSURE A message of \labelcref{original}'s infeasibility, a solution to \labelcref{original}, or a sequence $\infseries$
		\STATE solve \labelcref{kth-relaxed-prob}\label{step:nic_solve}
		\IF{\labelcref{kth-relaxed-prob} is infeasible}
		\RETURN \lq \labelcref{original} is infeasible\rq \label{step:nic_ret_inf}
		\ELSE
		\STATE denote $\incumbent$ as the solution of \labelcref{kth-relaxed-prob}
		\ENDIF
		\IF{$\vr(\incumbent) \leq \veczero$}
		\RETURN solution $\incumbent$ to \labelcref{original} \label{step:nic_ret_sol}
		\ENDIF
		\STATE set $k \leftarrow k + 1$
		\STATE goto (\labelcref{step:nic_solve})
	\end{algorithmic}
	
\end{algorithm}

As mentioned in the introduction, a reader familiar with the field of Lipschitz optimization might consider our approach as a generalization of the work~\cite{SchmidtSirventWollner2022}.
Although it originated independently, this is a reasonable consideration that we want to discuss in the following. 

\begin{remark}
	\label{rem:distinction-to-martin}
	In~\cite{SchmidtSirventWollner2022}, the authors consider a problem originating from mixed-integer nonlinear optimization. 
	In particular, the problem consists of a linear objective function with linear constraints, as well as implicitly given one-dimensional nonlinear equality constraints.
	Hereby, the optimizing variable $\vx$ has finite bounds and is part of a mixed-integer space, where the nonlinear functions depend on its continuous part only.
	
	In their algorithm, they iteratively add a cut for each function that does not satisfy its equality condition (approximately). 
	The cut is analogous to the norm-induced cut considered here, but uses the infinity norm in the continuous part of the $\vx$-space and a right-hand side of the absolute violation, as well as the global Lipschitz constant.
	Due to the use of the infinity norm, the authors re-model this cut by introducing additional binary variables and linear constraints that uphold the character of a mixed-integer linear problem for the resulting subproblems.
	As a consequence, the dimension of the subproblems grows by a factor of iterations times the number of nonlinear constraints.
	For this algorithm, the authors prove correctness and give a worst-case termination bound for obtaining an approximate solution.
	
	We investigate a more general problem class by considering a compact domain in a vector space over some field (and/or $\integers$, compare \Cref{rem:integers-valid}) instead of the linear constraints in a mixed-integer domain.
	Regarding the nonlinear functions, we go one step further and, for the individual components, assume a lack of information regarding the Lipschitz constant. 
	
	Our method is also based on adding cuts, but with respect to a general monotonous norm.
	Hence, the resulting subproblems inherit an equally general structure involving the set $\Omega$.
	This, however, prevents an explicit re-modeling of the cuts in order to keep the subproblems in their original problem class, but only allows for a discussion in this regard.
	
	In terms of theoretical contribution, our proofs of correctness, at some points, follow a different approach and open a new perspective on the method. 
	In addition, our termination bound is analogous to the results in~\cite{SchmidtSirventWollner2022} yet more general to account for the more general setting.
	We also comment on the availability of a point-wise Lipschitz constant in~\Cref{rem:point-dep-L}, answering a question posed in the conclusion of their paper, 
	and showcase the ambiguity of available component-wise Lipschitz constants on an example (see~\Cref{ex:berthold}).
	
\end{remark}

\subsection{Computing Global Lipschitz Constants}
\label{subsec:computing-lipschitz-constants}

Since our algorithm depends on the availability of a global Lipschitz constant, 
we want to emphasize its computation under different circumstances.

\subsubsection{Continuously Differentiable Constraint Functions}
\label{subsubsec:L-constant-cont-diff}
Although there exist elaborate methods that leverage continuous differentiability to a greater extent, we want to mention it for completeness.
The following is inspired by~\cite[Remark 3.1]{SchmidtSirventWollner2022} yet considers multidimensional functions on real vector spaces and is thus more general.
Since this result is leveraged in our examples in~\Cref{sec:examples}, we provide it in more detail than the remaining parts.

For now, let $\Omega \subseteq \reals^n$ and let $\vect{f}: \Omega \to \reals^m$ be continuously differentiable on an open convex superset $\Gamma$ of $\Omega$.
Let $D\vect{f}$ denote the Jacobian of $\vect{f}$ and consider $\vx,\, \vy \in \Gamma$. 
Then, there exists the following mean-value inequality for vector-valued functions~\cite[Chapter 1.1.2]{Cui2022}.
\begin{equation}
	\label{eq:L-constant-cont-diff}
	\norm[q]{\vect{f}(\vx) - \vect{f}(\vy)} \leq \sup_{t \in [0, 1]} \norm[p, q]{D\vect{f}(\vy + t(\vx - \vy))} \norm[p]{\vx - \vy}.
\end{equation}
Note that here we consider $\reals^n$ and $\reals^m$ to be equipped with a $p$- and $q$-norm, respectively, $p,\, q \in [1, \infty]$. 
The matrix norm is then induced by those vector norms and is defined as 
\begin{equation*}
	\norm[p, q]{A} = \sup\{\norm[q]{A\vx} \mid \vx \in \reals^n,\, \norm[p]{\vx} \leq 1 \},
\end{equation*}
for some $A \in \reals^{m \times n}$.
The proof of~\labelcref{eq:L-constant-cont-diff} can be found in~\cite[\nopp3.2.3]{Ortega2000}, for instance. 

This implies that if we can compute $\sup \norm[{p, q}]{D\vect{f}(\vect{z})}$ for $\vect{z}$ in the convex hull of $\Omega$ or an upper bound for this term,
we  receive a global Lipschitz constant for $\vect{f}$. 
Since $\norm[{p, q}]{\fcdot}$ as well as $D\vect{f}(\cdot)$ are continuous and the compactness of $\Omega$ transfers to its convex hull, at least we know that the supremum exists and takes a finite value.
That is, $\vect{f}$ is globally Lipschitz continuous in this case.

 \subsubsection{Bilevel Optimization}
 \label{subsubsec:L-constant-bilevel}
 
There also exist differentiable constraint functions, where the explicit form and, thus, the Jacobian are unknown. 
In some cases, this occurs in bilevel optimization if one considers the value function $\vr$ of the follower.
A function evaluation of $\vr$ is equivalent to solving (a regularized version of) the lower-level problem, given a fixed upper-level decision. 
The computation of a Lipschitz constant depends on the assumptions for both the upper and the lower level. 

In~\cite[Remark 2]{SchmidtSirventWollner2019}, the authors highlight that under compactness of the feasible set of the upper level, convexity of the lower level problem in the follower variable, and some constraint qualification,
the value function $\vr$ is globally Lipschitz continuous. 
In addition, if uniqueness holds for the solution of the lower level, which is parameterized in the upper level decision, as well as for the dual variables of the lower level, then $\vr$ is differentiable and the Lipschitz constant can be computed using the results in~\cite{gauvin1982}.

This is extended in~\cite{Gruebel2023} to the case of a non-convex quadratic objective function in the lower level. 
Supposing the lower level to be feasible for any upper level decision, the computation of the Lipschitz constant is conducted leveraging a variant of the Hoffman Lemma~\cite{Hoffman1952,Still2018} and linear optimization.

\subsubsection{Neural Networks}

As briefly sketched in the introduction, there exist approaches to approximate dynamical systems by means of neural networks~\cite{Raissi2019}.
This reduces infinite-dimensional problems to finite-dimensional ones, but requires the introduction of deeply nested nonlinear functions, the NNs.
Exactly computing the Lipschitz constants of Neural Networks is known to be an NP-hard problem~\cite{virmaux2018lipschitz, jordan2020exactly}.
As such, most works have focused on developing approximations or algorithms that run in exponential time. The authors of~\cite{virmaux2018lipschitz} provide a method to compute an upper bound to the Lipschitz constants of any automatically differentiable functions by leveraging the computational graph and the individual Lipschitz constants of each operation. In~\cite{latorre2020lipschitz}, the authors take advantage of the fact that for any Lipschitz continuous and differentiable scalar function on a convex domain, the Lipschitz constants can be estimated by maximizing the norm of the gradient on that domain. 
For NNs with differentiable activation functions, they reformulate this gradient maximization problem as a polynomial optimization problem. They then provide several different relaxations of this optimization problem, whose solution provides an upper bound for the Lipschitz constant, which is sufficient for our algorithm. 
A similar methodology for more general activation functions is provided in~\cite{fazlyab2019efficient}, where the authors formulate the Lipschitz estimation problem as a collection of semi-definite programs which span the trade-off between accuracy and scalability. 
Any of these or other existing methods can be used for the estimation of the Lipschitz constants for our algorithm.

\subsubsection{General Functions}
In case no information about the structure of the function is available, we can use sampling-based approaches in order to approximate the Lipschitz constant. 
One such method is outlined in~\cite{wood1996estimation}. 
It estimates the global Lipschitz constant by calculating local Lipschitz constants constructed using random samples of pairs of points on the domain. 
In particular, these local Lipschitz constants are then used to estimate the parameters of a reverse-Weibull distribution, whose location parameter then provides us with an estimate of the global Lipschitz constant.

\subsection{Solving~\labelcref{kth-relaxed-prob}}
\label{subsec:solving-subproblems}

In addition to the availability of a global Lipschitz constant, \labelcref{alg:nic} requires solving the subproblems~\labelcref{kth-relaxed-prob} to global optimality. 
Hereby, the iteration index $k$ equals the number of norm-induced cuts added to the relaxed feasible region~$\Omega$.
According to the properties of a norm, each cut is non-convex.
Although giving the problem an explicit formulation, non-convexities are considered problematic when global solutions are required. 
Hence, we present a discussion about the solvability of the subproblems~\labelcref{kth-relaxed-prob}, leveraging the structural properties resulting from norm-induced cuts. 

The cut structure is closely connected to the specific norm and field involved.
In the current subsection, we consider different $p$-norms, $p \in [1, \infty]$, and $F = \reals$.

Coming to the first edge case $p = \infty$, the resulting cuts are box-shaped and the norm is not differentiable everywhere. 
However, as discussed for instance in~\cite[Lemma 3.3]{SchmidtSirventWollner2022}, one can re-model those types of norm-induced cuts by introducing binary variables and a big-$M$ formulation. 
The latter introduces a potential disadvantage, since it highly depends on the magnitude of the constants $M$.
However, this procedure renders the resulting problem a mixed-integer or mixed-binary linear problem, which is considered to be tractable.
A similar situation applies for the other edge case $p = 1$. 
For completeness, the interested reader can find both re-modeling strategies in~\Cref{sec:reform-cuts}.

For $p \in (1, \infty)$, first note that the corresponding norm is differentiable everywhere except for the origin.
This allows us to compute gradients that can be leveraged in optimization. 
Second, a norm is convex by definition and, thus, a norm-induced cut represents a reverse convex constraint. 
As detailed below, reverse programming incorporates the potential to facilitate the solving process in comparison to general nonlinear constraints. 
Hence, in some sense, using $p$-norm cuts, general nonlinear functions are replaced by constraint types that are still non-convex yet considered more tractable.
For $p = 2$, one can even square both sides of a norm-induced cut and leverage non-convex quadratic programming techniques, which have received great attention in the past.

\Cref{tab:norm_comparison} summarizes the advantages and challenges with respect to the $p$-norms.
As we deem $p \in (1, \infty)$ to be the more challenging part, we discuss solving reverse convex programs in the following.

\begin{table}[h]
    \caption{\label{tab:norm_comparison}Comparison of advantages and challenges for different $p$-norms in norm-induced cuts on $\reals^n$.}
    \begin{tabularx}{\textwidth}{>{\centering\arraybackslash}p{0.9cm}p{2.0cm}XX}
    \toprule
    $p $ & \textbf{type of }\labelcref{kth-relaxed-prob} & \textbf{advantages} & \textbf{challenges} \\ \midrule
    $\{1, \infty\}$ & mixed-integer linear & 
    \begin{enumerate}[label=(\roman*), nosep, left=0pt, labelsep=0.0cm, align=left, before=\leavevmode\vspace{-\baselineskip}] 
    	\item Re-modeling by MIP techniques 
    	\item Solution by standard solvers and established methods
    \end{enumerate} &
    \begin{enumerate}[label=(\roman*), nosep, left=0pt, labelsep=0.0cm, align=left, before=\leavevmode\vspace{-\baselineskip}]
    	\item Introduction of binary component
    	\item Numerical troubles with bad estimations of big-$M$ constant
    \end{enumerate} \\
    $(1, \infty)$ & reverse \mbox{convex} &
    \begin{enumerate}[label=(\roman*), nosep, left=0pt, labelsep=0.0cm, align=left, before=\leavevmode\vspace{-\baselineskip}]
    	\item Differentiable constraint functions
    	\item Reverse convex instead of general nonlinear constraints
    \end{enumerate} & 
    \begin{enumerate}[label=(\roman*), nosep, left=0pt, labelsep=0.0cm, align=left, before=\leavevmode\vspace{-\baselineskip}]
    	\item Subproblem still non-convex
        \item Necessity for elaborate methods from reverse convex programming
    \end{enumerate} \\
    $2$ & quadratic &
    \parbox[t]{\linewidth}{
    \begin{enumerate}[label=(\roman*), nosep, left=0pt, labelsep=0.0cm, align=left, before=\leavevmode\vspace{-\baselineskip}]
    	\item Leveraging quadratic programming advances
    \end{enumerate}
   } & 
    \parbox[t]{\linewidth}{
    \begin{enumerate}[label=(\roman*), nosep, left=0pt, labelsep=0.0cm, align=left, before=\leavevmode\vspace{-\baselineskip}]
    	\item Subproblem still non-convex
    \end{enumerate}
   } \\
    \bottomrule
    \end{tabularx}
\end{table}

First, note that an application including cuts that lower bound a norm by a positive scalar is the obnoxious facility location problem~\cite{Kalczynski2021}.
In particular, those cuts correspond to reverse convex constraints, which appear in several applications~\cite{Jacobsen2009}. 
While problems with reverse convex constraints are difficult to solve, there has been significant work done to develop algorithms to solve them to global optimality. 
The authors of~\cite{tuy1987convex} showed that problems with multiple reverse convex constraints can be converted into a problem with a single reverse convex constraint and a convex constraint, which can then be solved using concave minimization. 
This was followed by several works expanding on this connection and developing algorithms for more general conditions~\cite{Horst2013, thoai1988modified}.
For the specific setting where the reverse convex set is a polyhedron, the feasible region can be written as a union of halfspaces. This observation was leveraged to develop algorithms to solve reverse convex problems with convex sets through inner approximations~\cite{yamada2000inner}. 
A good overview of the variety of methods available is presented in~\cite{horst2013handbook}.
Recent works have leveraged enumerative schemes~\cite{bunin2016extended} and heuristic methods~\cite{drezner2020solving} to solve the reverse convex programs.

We also point to~\cite{hildebrand2024}, which analyses the complexity of the integer feasibility problem of reverse convex sets where the latter share a property called Boundary Hyperplane Cover. 
Assuming $\Omega$ to be polyhedral, we highlight that our subproblems fall into the class of such problems and, thus, according to this article can be solved in polynomial time given the dimension to be fixed.

For the case of 2-norm and continuous variables, the work in~\cite{bienstock2014polynomial} shows that the subproblems are polynomially solvable if the set \(\Omega\) is a polyhedron with a bound on the number of facets and a fixed number of inverse 2-norm constraints. Their proof is constructive and provides an algorithm to solve the problem by enumerating over possible solutions on the faces of the feasible region. 
Another approach is presented by~\cite{beck2017branch} who provide a branch and bound algorithm to tackle the problem.
In case of the presence of integer variables, we can leverage approaches developed to tackle non-convex quadratically constrained quadratic programs such as~\cite{misener2012global}.

\subsection{Definitions}
\label{subsec:definitions}

Now, in order to discuss properties of the \labelcref{alg:nic} method in the following sections, we clarify the meaning of inequalities in a multi-dimensional context and introduce the notation of a ball as well as the terms $\eps$-packing/-covering. 

\begin{remark}
	Given a point $\vx \in \Omega$, it is infeasible for~\labelcref{original} if there exists at least one component $r_i$ of $\vr = (r_1, \dots, r_m)$, $i \in \{1, \dots, m\}$, such that $r_i(\vx) > 0$.
	For the sake of concise presentation, we use $\vr(\vx) \nleq \veczero$ to notate that such a component $r_i$ exists.
\end{remark}

\begin{definition}
	\label{def:ball}
	Let $\eps > 0$, $\bar{\vx} \in X$, and $\norm{\,\cdot\,}$ a norm on $X$. Then, we define the \emph{$\eps$-ball around $\bar{\vx}$} as 
	\begin{equation*}
		B_\eps(\bar{\vx}) \coloneqq \Defset{\vx \in X}{\norm{\vx - \bar{\vx}} < \eps}.
	\end{equation*}
	For the case $\bar{\vx} = \veczero$, we write the short form
	\begin{equation*}
		B_\eps \coloneqq B_\eps(\veczero) = \Defset{\vx \in X}{\norm{\vx} < \eps}.
	\end{equation*}
\end{definition}

For the fundamental mathematics regarding $\eps$-packing/-covering, we refer to \cite{Tikhomirov1993}.
We use the notation from the lecture notes \cite{Han2021} and adapt the definition to fit our question.

\begin{definition}
	\label{def:pack}
	Let $k \in \naturals_0$, $\eps > 0$, and $\norm{\,\cdot\,}$ a norm on $X$.
	Further, consider $A \subseteq X$ and $P \coloneqq {\sset{\vx^0, \dots, \vx^k} \subseteq A}$ a set of points in $A$.
	\begin{itemize}
		\item[a)] If $A \subseteq \bigcup_{i = 0}^k B_\eps(\vx^i)$, we call $P$ an \emph{$\eps$-covering of $A$}. 
		Furthermore, we define the minimal cardinality of such a $P$ as
		\begin{equation*}
			N(A, \norm{\,\cdot\,}, \eps) \coloneqq \min \Defset{k}{\exists\text{$\eps$-covering of $A$ with size }k},
		\end{equation*}
		the \emph{covering number of $A$ \wrt $\eps$}.
		\item[b)] If $\sset{B_{\eps/2}(\vx^i)}_{i = 0,\dots,k}$ are pairwise disjoint, \ie, $\norm{\vx^i - \vx^j} \geq \eps$ for all ${i,\, j = 0, \dots, k}$, $i \neq j$, we call $P$ an \emph{$\eps$-packing of $A$}, 
		since $A$ contains all the centers of the balls.
		We also define the maximal cardinality of such a $P$, \ie, 
		\begin{equation*}
			M(A, \norm{\,\cdot\,}, \eps) \coloneqq \max \Defset{k}{\exists \text{$\eps$-packing of $A$ with size } k},
		\end{equation*}
		the \emph{packing number of $A$ \wrt $\eps$}.
	\end{itemize}
\end{definition}

\section{Convergence and Optimality Results}
\label{sec:convergence}

This section is dedicated to showing the correctness of \labelcref{alg:nic}. 
Depending on the feasibility of the original problem \labelcref{original}, we show feasibility and even optimality of a returned solution or of the accumulation points of the sequence of intermediate points produced by the method. 
In the end, we give a statement regarding the availability of local solutions in step~\labelcref{step:nic_solve}.

Throughout the current section, we assume exactness of the underlying methods in order to analyze the theoretical implications.
In practice, however, numerical errors occur, and approximate solutions suffice. 
This is covered by the analysis of the problem complexity in~\Cref{sec:termination}.

We start by remarking on the boundedness of~\labelcref{original} in the case of feasibility. 

\begin{remark}
	As $\vr$ is Lipschitz continuous, it is also continuous. 
	By rewriting the feasible set of \labelcref{original} as
	\begin{equation*}
		Q = \Omega \cap \vr^{-1}((-\infty, 0]^m),
	\end{equation*}
	where $\vr^{-1}(\cdot)$ notes the pre-image of $r$, we can use the closure of $(-\infty, 0]^m$ to derive the closure of $\vr^{-1}((-\infty, 0]^m)$. 
	Together with the compactness of $\Omega$, this implies that  $Q$ is compact. 
	Therefore, problem \labelcref{original} minimizes a continuous function over a compact set and, thus, if $Q \neq \emptyset$, the problem has a finite solution value.
\end{remark}

With this in mind, we state two lemmata. 
The first one ensures that \labelcref{alg:nic} does not find any point twice, 
the second one implies that \labelcref{kth-relaxed-prob} is a relaxation of~\labelcref{original} for each $k \in \naturals$, as mentioned in \Cref{def:sets}. 
In other terms,
we construct an outer approximation of the feasible region.

\begin{lemma}
	\label{lem:cut_off}
	Consider some $k \in \naturals_0$ and a solution $\incumbent$ to \labelcref{kth-relaxed-prob} such that $\vr(\incumbent) \nleq \veczero$. 
	Then, $\incumbent \notin Q_{k+1}$, \ie, the point $\incumbent$ is infeasible for problem $\mathrm{(P}_{k+1}\mathrm{)}$.
\end{lemma}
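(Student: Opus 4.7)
The plan is to prove \Cref{lem:cut_off} by a direct evaluation of the newly added norm-induced cut at the candidate point $\incumbent$ itself, showing that this cut is violated. The argument is essentially a sanity check that each cut does remove the infeasible incumbent that generated it, so I do not expect any real technical obstacle; the only subtlety is to make explicit where the hypothesis $\vr(\incumbent) \nleq \veczero$ enters.

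First, I would invoke \Cref{rem:consec_sets} to obtain
\begin{equation*}
	Q_{k+1} \subseteq \Defset{\vx \in \Omega}{\frac{\norm{\vr(\incumbent)_+}}{L} \leq \norm{\vx - \incumbent}}.
\end{equation*}
Hence it suffices to exhibit a single constraint of $Q_{k+1}$ that $\incumbent$ violates, and the freshly added norm-induced cut at $\incumbent$ is the natural candidate. I would then substitute $\vx = \incumbent$ into this cut. The right-hand side collapses to $\norm{\incumbent - \incumbent} = \norm{\veczero} = 0$ by positive definiteness of the norm on $X$.

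Next I would analyze the left-hand side. By assumption, $\vr(\incumbent) \nleq \veczero$, so there exists an index $i \in \{1, \dots, m\}$ with $r_i(\incumbent) > 0$. Consequently, the vector $\vr(\incumbent)_+$ has at least one strictly positive component and is therefore not the zero vector in $\reals^m$. Since $\norm[\reals^m]{\,\cdot\,}$ is a norm, positive definiteness yields $\norm{\vr(\incumbent)_+} > 0$. Together with $L > 0$, this gives $\norm{\vr(\incumbent)_+}/L > 0$.

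Combining the two estimates, we obtain
\begin{equation*}
	\frac{\norm{\vr(\incumbent)_+}}{L} > 0 = \norm{\incumbent - \incumbent},
\end{equation*}
so the inequality defining the norm-induced cut at $\incumbent$ is strictly violated by $\incumbent$. In view of \Cref{rem:consec_sets}, this implies $\incumbent \notin Q_{k+1}$, which is the desired conclusion. Note that monotonicity of $\norm[\reals^m]{\,\cdot\,}$ plays no role in this particular argument; only the fact that a norm vanishes exactly on the zero vector is used.
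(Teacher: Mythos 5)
Your proof is correct and coincides with the paper's own argument: both substitute $\vx = \incumbent$ into the newly added cut, observe that the right-hand side is $0$ while the left-hand side is strictly positive because $\vr(\incumbent)_+ \neq \veczero$ and $L > 0$, and conclude via \Cref{rem:consec_sets}. Your version merely spells out the positive-definiteness step more explicitly.
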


\begin{proof}
	As $\incumbent$ is a solution to \labelcref{kth-relaxed-prob}, we have $\incumbent \in Q_k$. 
	As $L > 0$ and there exists a strictly positive component of $\vr(\incumbent)$, it follows 
	\begin{equation*}
		\norm{\incumbent - \incumbent} = 0 < \frac{\norm{\vr(\incumbent)_+}}{L}.
	\end{equation*}
	Then, by~\Cref{rem:consec_sets} we derive that $\incumbent \notin Q_{k+1}$ and the claim follows.
\end{proof}

\begin{lemma}
	\label{lem:relax}
	For any $k \in \naturals_0$, it holds that $Q \subseteq Q_k$.
\end{lemma}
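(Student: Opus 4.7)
The plan is to show membership of an arbitrary $\vx \in Q$ in $Q_k$ by verifying that each of the $k$ norm-induced cut inequalities is automatically satisfied, once we exploit the fact that $\vr(\vx)_+ = \veczero$ together with the Lipschitz property of $\vr$ and the monotonicity of $\norm[\reals^m]{\fcdot}$.

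First I would dispose of the trivial base case $k=0$: by \Cref{def:sets}, $Q_0 = \Omega$, and $Q \subseteq \Omega$ holds by definition of $Q$. For $k \geq 1$, I would pick an arbitrary $\vx \in Q$. Since $Q \subseteq \Omega$, $\vx \in \Omega$ already, so it only remains to check the $k$ cut constraints. Fix $i \in \{0, \dots, k-1\}$. Because $\vx$ is feasible for \labelcref{original} we have $\vr(\vx) \leq \veczero$, hence $\vr(\vx)_+ = \veczero$ and in particular $\norm[\reals^m]{\vr(\vx^i)_+} = \norm[\reals^m]{\vr(\vx^i)_+ - \vr(\vx)_+}$.

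The key step is then the inequality
\begin{equation*}
\norm[\reals^m]{\vr(\vx^i)_+ - \vr(\vx)_+} \leq \norm[\reals^m]{\vr(\vx^i) - \vr(\vx)}.
\end{equation*}
This is the place where the monotonicity assumption on the norm on $\reals^m$ is genuinely used: the componentwise inequality $|a_+ - b_+| \leq |a - b|$ for real numbers $a,b$ gives a vector in $\reals^m_{\geq 0}$ dominating another coordinatewise, and a monotonic norm preserves this ordering. Chaining this with the Lipschitz property $\norm[\reals^m]{\vr(\vx^i) - \vr(\vx)} \leq L \norm[X]{\vx^i - \vx}$ and dividing by $L > 0$ yields precisely
\begin{equation*}
\frac{\norm{\vr(\vx^i)_+}}{L} \leq \norm{\vx - \vx^i},
\end{equation*}
which is the $i$th norm-induced cut. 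Since $i$ was arbitrary, $\vx \in Q_k$, proving $Q \subseteq Q_k$.

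The main (minor) obstacle is the justification of the contraction property of the positive-part map with respect to the monotonic norm on $\reals^m$; everything else is bookkeeping. I would either cite monotonicity directly or give the one-line argument via the componentwise estimate $|a_+ - b_+| \leq |a-b|$.
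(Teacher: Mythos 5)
Your proposal is correct and follows essentially the same route as the paper: componentwise domination of $\vr(\vx^i)_+$ by a difference vector, monotonicity of the norm on $\reals^m$, the Lipschitz bound, and division by $L>0$. The only difference is cosmetic — where you invoke the $1$-Lipschitzness of the positive-part map together with $\vr(\vx)_+ = \veczero$, the paper routes the same componentwise estimate through the auxiliary vector $(\vr(\bar{\vx}))_{P^i}$ restricted to the indices where $r_p(\vx^i) \geq 0$; your intermediate step is arguably the cleaner of the two.
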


\begin{proof}
	In the following, we use the short notation $[m] \coloneqq \{1, \dots, m\}$.
	For a vector $\vect{z} \in \reals^m$ and a subset of indices $I \subseteq [m]$, define $\vect{z}_I \in \reals^m$ such that
	\begin{equation*}
		(\vect{z}_I)_p = \begin{cases}
		z_p,\, p \in I,\\
		0, \, \text{else},
	\end{cases} \text{ for } p \in [m].
	\end{equation*}
	
	Now, if $k = 0$, $Q_k = \Omega$ by construction and the claim follows from \Cref{def:sets}.
	
	Otherwise, let $k \in \naturals$ be arbitrary but fixed and let $\finseries$ be a sequence to determine $Q_k$.
	Then, it holds that
	\begin{equation*}
		\vx^i \in \Omega \qquad \land \qquad \vr(\vx^i) \nleq \veczero,
	\end{equation*}
	for $i = 0,\dots,k-1$, where the second condition is equivalent to $\norm{\vr(\vx^i)_+} > 0$.
	We define the set of non-negative indices of $\vr(\vx^i)$ as $P^i \coloneqq \{p \in [m] \mid r_p(\vx^i) \geq 0\}$.
	Further, consider some $\bar{\vx} \in Q \subseteq \Omega$ and $\vr(\bar{\vx}) \leq \veczero$. 
	We can conclude that 
	\begin{equation*}
		0 < \cutlhs{i} \leq \frac1L \norm{(\vr(\bar{\vx}))_{P^i} - \vr(\vx^i)_+} \leq \frac{1}{L}\norm{\vr(\bar{\vx}) - \vr(\vx^i)} \leq\norm{\bar{\vx} - \vx^i},
	\end{equation*}
	for all $i = 1,\dots,k-1$, where the second and third inequalities follow from the assumed monotonicity of the norm.
	In particular, the components of the argument in the norm are non-decreasing in absolute terms from left to right, and therefore the norm expressions are non-decreasing as well. 
	Hence, for all $i = 0,\dots,k-1$, $\bar{x}$ satisfies the norm-induced cut in $\vx^i$. 
	With \Cref{def:sets} it follows $\bar{\vx} \in Q_k$ and the claim is proven.
\end{proof}

\Cref{lem:relax} shows that infeasibility of problem \labelcref{kth-relaxed-prob} implies the infeasibility of problem \labelcref{original}, 
in particular, $Q_k = \emptyset \Rightarrow Q = \emptyset$. 
Therefore, step~\labelcref{step:nic_ret_inf} associated with the corresponding if-clause is reasonable 
and \labelcref{alg:nic} gives a correct answer if it stops in step~\labelcref{step:nic_ret_inf}. 
Furthermore, it also follows from \Cref{lem:relax} that \labelcref{kth-relaxed-prob} is a relaxation of \labelcref{original} for $k \in \naturals_0$.
Therefore, the solution value of \labelcref{kth-relaxed-prob}, which is obtained during \labelcref{alg:nic}, gives a lower bound to the value of \labelcref{original}.

In the following, we prove that \labelcref{alg:nic} either converges to a feasible solution in finite time or produces a sequence with feasible accumulation points for \labelcref{original}.

\begin{theorem}\label{thm:feas}
	Let \labelcref{original} be feasible, \ie, $Q \neq \emptyset$. Then, either
	\begin{itemize}
		\item[a)] \labelcref{alg:nic} stops with a feasible solution $\sol$ for \labelcref{original} in step \labelcref{step:nic_ret_sol}, or
		\item[b)] \labelcref{alg:nic} creates a sequence of points $\infseries$, which has at least one convergent subsequence,
		each of which has a limit feasible for \labelcref{original}.
		That is, $\infseries$ has at least one accumulation point and all accumulation points of $\infseries$ are feasible for \labelcref{original}.
	\end{itemize}
\end{theorem}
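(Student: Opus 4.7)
The plan is to split along the two outcomes of \labelcref{alg:nic}. Case (a) is immediate: if the method terminates in step~\labelcref{step:nic_ret_sol}, the \texttt{if}-guard asserts $\vr(\incumbent)\le\veczero$ and $\incumbent\in\Omega$ (since it solves \labelcref{kth-relaxed-prob} and $Q_k\subseteq\Omega$), so $\incumbent\in Q$ and nothing more is needed. For case (b), I would first argue that the method produces an infinite sequence $\infseries\subseteq\Omega$, because the only exits in steps~\labelcref{step:nic_ret_inf}~and~\labelcref{step:nic_ret_sol} are unreachable: the former is ruled out by $Q\neq\emptyset$ combined with \Cref{lem:relax} (infeasibility of \labelcref{kth-relaxed-prob} would imply $Q=\emptyset$), and the latter does not fire by assumption.

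Existence of at least one accumulation point is then a direct consequence of the Bolzano--Weierstrass theorem applied to the compact set $\Omega$: some subsequence $\infsubseries$ converges to a limit $\sol\in\Omega$. The main step is to show that every such limit is feasible for \labelcref{original}. Fix a convergent subsequence $\vx^{i_j}\to\sol$ and pick any two indices $j<j'$. By construction of the method, $\vx^{i_{j'}}\in Q_{i_{j'}}$, and since $i_{j'}>i_j$, the norm-induced cut generated in $\vx^{i_j}$ is already present in $Q_{i_{j'}}$. Hence
\begin{equation*}
\frac{\norm{\vr(\vx^{i_j})_+}}{L}\;\le\;\norm{\vx^{i_{j'}}-\vx^{i_j}}.
\end{equation*}
Letting $j'\to\infty$ with $j$ fixed, continuity of the norm yields $\norm{\vr(\vx^{i_j})_+}/L\le\norm{\sol-\vx^{i_j}}$. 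Now letting $j\to\infty$, the right-hand side tends to $0$, while the left-hand side tends to $\norm{\vr(\sol)_+}/L$ by continuity of $\vr$ (Lipschitz), of the component-wise positive part $(\,\cdot\,)_+$, and of the norm. Thus $\norm{\vr(\sol)_+}=0$, which is equivalent to $\vr(\sol)\le\veczero$, so $\sol\in Q$.

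The only subtle point I anticipate is ensuring the continuity chain used in the final limit: $\vr$ is continuous as a Lipschitz map, each coordinate of $(\,\cdot\,)_+$ is the continuous function $\max(\cdot,0)$, and the monotonous norm on $\reals^m$ is continuous, so the composition $\vy\mapsto\norm{\vr(\vy)_+}$ is continuous at $\sol$, which is what legitimises passing to the limit inside the norm. Everything else reduces to Bolzano--Weierstrass on $\Omega$ and the cut inequality already enforced by \Cref{rem:consec_sets}.
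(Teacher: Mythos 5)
Your proof is correct and follows essentially the same route as the paper's: case (a) is identical, and in case (b) both arguments rest on the cut inequality $\norm{\vr(\vx^{i_j})_+}/L \le \norm{\vx^{i_{j'}} - \vx^{i_j}}$ between members of a convergent (hence Cauchy) subsequence, combined with the continuity of $\vy \mapsto \norm{\vr(\vy)_+}$. The only difference is presentational: you pass to the limit twice directly to conclude $\norm{\vr(\sol)_+} = 0$, whereas the paper sets $\eps = \norm{\vr(\sol)_+} > 0$ and derives a contradiction via an $\eps/2$ argument.
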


\begin{proof}
	With the assumption and \Cref{lem:relax}, it is clear that
	\begin{equation}
		\label{eq:subset_nonempty}
		\emptyset \neq Q \subseteq \bigcap_{k} Q_k.
	\end{equation}
	Here, $k$ indicates the iteration index and is specified in the case distinction below.
	From~\labelcref{eq:subset_nonempty} we have that $Q_k \neq \emptyset$ and, thus, \labelcref{alg:nic} does not stop in step~\labelcref{step:nic_ret_inf}. 
	Hence, it either stops in step~\labelcref{step:nic_ret_sol} or runs for infinite time. 
	We start with the former case.
	
	a) Let $k \in \naturals_0$ be the index of the stopping iteration. 
	According to \Cref{def:sets}, $Q_k \subseteq \Omega$ and, thus, the current solution satisfies $\incumbent \in \Omega$.
	Since stopping in step~\labelcref{step:nic_ret_sol} requires $r(\incumbent) \leq \veczero$, 
	it follows that \labelcref{alg:nic} returns $\sol \coloneqq \incumbent \in Q$, \ie, a feasible point for \labelcref{original}.
	
	b) \labelcref{alg:nic} runs for infinite time, \ie, stopping in step~\labelcref{step:nic_ret_inf} is not possible and we assume that we never satisfy the criterion in step~\labelcref{step:nic_ret_sol}.
	Again, as $Q_k \subseteq \Omega$ for all $k \in \naturals_0$, we receive a sequence $\infseries \subseteq \Omega$. 
	With $\Omega$ being compact, $\infseries$ has at least one convergent subsequence $\infsubseries$ with 
	\begin{equation*}
		\lim_{j \rightarrow \infty} \subincumbent = \sol \in \Omega.
	\end{equation*}
	We prove the feasibility of $\sol$ for \labelcref{original} by contradiction and assume $\sol \notin Q$, \ie, $\vr(\sol) \nleq \veczero$. 
	In particular, there exists $\eps \coloneqq \norm{\vr(\sol)_+} > 0$.
	
	As $\vr$ is Lipschitz continuous, it is also continuous. 
	With the continuity of $\norm{\fcdot}$ and the component-wise maximum of the argument and zero, we can conclude the continuity of the entire expression $\norm{\vr(\fcdot)_+}$.
	Hence, there exists $j' \in \NN$ such that $\norm{\vr(\subincumbent)_+} > \eps/2$ for all $j \geq j'$.
	Further, as the sequence $\infsubseries$ converges to $\sol$, there exists $j'' \in \NN$ such that $\norm{\vx^{i_{j_1}} - \vx^{i_{j_2}}} \leq \eps/(2L)$ for all $j_1, \, j_2 \geq j''$.
	
	Now, let $j_1,\, j_2 \geq \max(j', j'')$ with $j_1 < j_2$. 
	According to the above considerations, we can derive
	\begin{equation}
		\label{ieq:feas-contradiction}
		\frac{\eps}{2L} < \frac{\norm{\vr(\vx^{i_{j_1}})_+}}{L} \leq \norm{\vx^{i_{j_2}} - \vx^{i_{j_1}}} \leq \frac{\eps}{2L},
	\end{equation}
	which is a contradiction.
	The first non-strict inequality follows from the fact that $\vx^{i_l} \in Q_{i_k}$, compare~\labelcref{eq:def-nic}.

\end{proof}

In other terms, this shows that \labelcref{alg:nic} solves the feasibility problem of \labelcref{original} in case it really is feasible. 
An alternative proof of part b) for $\RR$ (and fields isomorphic to it) includes a covering argument, but is not valid for the generality we assume here.
However, this alternative comes in handy when one investigates the running time of~\labelcref{alg:nic} for infeasible problems. 
Therefore, we present it in a brief version in~\Cref{subsec:termination-infeasible} and give exemplary running times there.

The following remark takes the availability of point-dependent, thus smaller, Lipschitz constants into account and comments on their effects on \Cref{thm:feas}.

\begin{remark}
	\label{rem:point-dep-L}
	Let's assume we have access to the point-dependent Lipschitz constants $L_\vx$ of $\vr$ for each $\vx \in \Omega$. That is,
	\begin{equation}
		\label{eq:local_lip}
		\forall \vx \in \Omega\, \exists L_\vx > 0: \norm{\vr(\vx) - \vr(\vy)} \leq L_\vx \norm{\vx - \vy}, \qquad \text{for all } \vy \in \Omega.
	\end{equation}
	First, note that the statements of \Cref{lem:cut_off} and \Cref{lem:relax} still hold when using the point-dependent Lipschitz constants $L_{\incumbent}$ and $L_{\vx^i}$ in the proofs, respectively. 
	Second, the global Lipschitz constant $L$ is an upper bound to every point-dependent one, \ie, $L \geq L_\vx$ for all $\vx \in \Omega$.
	Therefore, we can use an upper bound of $\eps/(2L_{\vx^{i_{j_1}}})$ before the first strict inequality in~\labelcref{ieq:feas-contradiction} leading to the same result.
	Further, the availability of point-dependent Lipschitz constants can lead to stronger cuts, as the radius of~\labelcref{eq:def-nic} potentially increases, and may result in faster convergence.
	
	Further, during the execution of \labelcref{alg:nic} in iteration $k \in \naturals_0$, we have to consider the Lipschitz continuity of $\vr$ only on $Q_k$. 
	We note that, while this has no influence on the behavior of the algorithm per se, it might be possible to leverage a bounded domain such as $Q_k$ to compute the Lipschitz constant efficiently, as well as obtain a lower constant.
	
	It is also possible to combine pointwise dependence with the current feasible region.
	In particular, if we are in iteration $k \in \naturals_0$ and have solved \labelcref{kth-relaxed-prob} to receive $\incumbent$, 
	we could compute $L_{\incumbent}$ for $\vy \in Q_k$ instead of doing so for all $\vy \in \Omega$ as denoted in~\labelcref{eq:local_lip}.
	Then, we could proceed and add the cut with respect to $L_{\incumbent}$.
\end{remark}

Besides the question of the feasibility of the output, its quality/optimality also needs to be investigated.
Note that \Cref{thm:feas} does not require the solution in step~\labelcref{step:nic_solve} to be optimal per se. 
Therefore, \labelcref{alg:nic} produces a feasible output even if \labelcref{kth-relaxed-prob} is only solved for a feasible solution. 
However, when investigating the optimality of the returned solution or the accumulation points,
we assume the global optimality of $\incumbent$ for \labelcref{kth-relaxed-prob} in step~\labelcref{step:nic_solve} 
and apply a case distinction as in \Cref{thm:feas}.

\begin{theorem}\label{thm:opt}
	Let \labelcref{original} be feasible, \ie, $Q \neq \emptyset$, 
	and let step~\labelcref{step:nic_solve} produce a globally optimal point $\incumbent$ for \labelcref{kth-relaxed-prob}, $k \in \naturals_0$. 
	Then, either
	\begin{itemize}
		\item[a)] \labelcref{alg:nic}~stops with a globally optimal solution $\sol$ of \labelcref{original} in step~\labelcref{step:nic_ret_sol}, or
		\item[b)] \labelcref{alg:nic}~creates a sequence of points $\infseries$, which has at least one convergent subsequence, 
		each of which has a limit that is globally optimal for \labelcref{original}.
		That is, $\infseries$ has at least one accumulation point and all accumulation points of $\infseries$ are globally optimal for \labelcref{original}.
	\end{itemize}
\end{theorem}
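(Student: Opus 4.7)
The plan is to bootstrap on \Cref{thm:feas} and use the fact that each $Q_k$ is a relaxation of $Q$ to turn feasibility of the limit into optimality via a standard lower-bound argument. Since (P) is feasible, \Cref{thm:feas} already rules out infeasibility being reported in step~\labelcref{step:nic_ret_inf} and leaves us with exactly the two cases a) and b) of the statement, so the work is to upgrade ``feasible'' to ``globally optimal'' in each case.

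For case a), suppose \labelcref{alg:nic} stops in iteration $k$ at $\incumbent$. By \Cref{thm:feas}a), $\incumbent \in Q$, so it is a feasible candidate. By the running assumption, $\incumbent$ is globally optimal for \labelcref{kth-relaxed-prob}, and by \Cref{lem:relax} we have $Q \subseteq Q_k$. Therefore
\begin{equation*}
f(\incumbent) \leq f(\vx) \quad \text{for all } \vx \in Q_k \supseteq Q,
\end{equation*}
so $\sol \coloneqq \incumbent$ attains the minimum of $f$ on $Q$.

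For case b), assume \labelcref{alg:nic} runs for infinite time and produces $\infseries$. \Cref{thm:feas}b) guarantees the existence of accumulation points and that every accumulation point lies in $Q$. Fix any such accumulation point $\sol \in Q$ and a subsequence $\infsubseries$ with $\subincumbent \to \sol$. For each $j$, global optimality of $\subincumbent$ for $(\mathrm{P}_{i_j})$ together with $Q \subseteq Q_{i_j}$ from \Cref{lem:relax} yields
\begin{equation*}
f(\subincumbent) \leq f(\vy) \quad \text{for all } \vy \in Q.
\end{equation*}
Passing to the limit $j \to \infty$ and invoking continuity of $f$ on the compact set $\Omega$, we get $f(\sol) = \lim_{j\to\infty} f(\subincumbent) \leq f(\vy)$ for every $\vy \in Q$. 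Since $\sol \in Q$, this says $\sol$ is globally optimal for~\labelcref{original}. As the accumulation point was arbitrary, all accumulation points are globally optimal.

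I do not expect a real obstacle: the heavy lifting (feasibility of the limit, existence of accumulation points, the relaxation property) has already been done in \Cref{lem:relax} and \Cref{thm:feas}. The only subtlety worth double-checking is that the inequality $f(\subincumbent) \leq f(\vy)$ for $\vy \in Q$ survives the limit in $j$; this is immediate from continuity of $f$ on the compact $\Omega$, since $Q \subseteq Q_{i_j}$ uniformly in $j$ and the right-hand side does not depend on $j$.
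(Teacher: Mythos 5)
Your proposal is correct and follows essentially the same route as the paper's proof: both cases are reduced to \Cref{thm:feas} for feasibility and to \Cref{lem:relax} for the lower-bound inequality, with continuity of $f$ used to pass to the limit in case b). The only difference is cosmetic (you phrase the optimality inequality pointwise over $Q$ rather than via $\min_{\vx \in Q} f(\vx)$), so no further comment is needed.
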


\begin{proof}
	With \labelcref{original} being feasible, \Cref{thm:feas} shows that \labelcref{alg:nic} either stops in step~\labelcref{step:nic_ret_sol} or iterates infinitely often. 
	We start with the former case.
	
	a) Let $k\in\naturals_0$ be the index of the stopping iteration, in which $\incumbent$ is returned. 
	According to \Cref{thm:feas}, $\incumbent$ is feasible for \labelcref{original}, \ie, $\incumbent \in Q$, and, thus, it is
	\begin{equation*}
		\min_{\vx \in Q} f(\vx) \leq f(\incumbent).
	\end{equation*}
	Further, by assumption $\incumbent$ is optimal for~\labelcref{kth-relaxed-prob} and \Cref{lem:relax} states $Q \subseteq Q_k$ which implies 
	\begin{equation*}
		f(\incumbent) = \min_{\vx \in Q_k} f(\vx) \leq \min_{\vx \in Q}f(\vx).
	\end{equation*}
	Together, we conclude 
	\begin{equation*}
		\min_{\vx \in Q} f(\vx) = f(\incumbent),
	\end{equation*}
	which proves claim a).
	
	b) Assuming \labelcref{alg:nic} iterates infinitely, we denote $\infseries$ as the resulting sequence of intermediate solutions to the respective subproblems.
	From \Cref{thm:feas}, we derive that the series has at least one accumulation point and that all its accumulation points are feasible for \labelcref{original}. 
	Hence, let $\infsubseries$ denote a convergent subsequence of $\infseries$ with limit $\sol \in Q$. 
	The feasibility of $\sol$ for \labelcref{original} immediately gives 
	\begin{equation*}
		\min_{\vx \in Q} f(\vx) \leq f(\sol).
	\end{equation*} 
	Applying \Cref{lem:relax} on the index of the subsequence, leads to $Q \subseteq Q_{i_j}$ and, thus, 
	\begin{equation*}
		f(\subincumbent) = \min_{\vx \in Q_{i_j}} f(\vx) \leq \min_{\vx \in Q}f(\vx),
	\end{equation*}
	for $j \in \naturals_0$. 
	Leveraging the continuity of $f$ when taking the limit, we can derive
	\begin{equation*}
		\lim\limits_{j \rightarrow \infty} f(\subincumbent) = f(\lim\limits_{j \rightarrow \infty} \subincumbent) = f(\sol) \leq \min_{\vx \in Q} f(\vx).
	\end{equation*}
	Therefore, $f(\sol) = \min_{\vx \in Q} f(\vx)$ which concludes the proof.
\end{proof}

So far, we have proven that \labelcref{alg:nic} is suited to tackle feasible problems which have a structure like~\labelcref{original}.
We can further extend the method when either Lipschitz constants are available for components of the constraint function $\vr$ or 
the function in question is defined on a lower-dimensional subspace of the feasible set.
The combination of these two extensions is also treatable.
A formalization of this is given below.

\begin{remark}
	\label{rem:separate-cuts}
	Let $\vr = (\vr_p)_{p =1,\dots,R}$ for $R \in \naturals$, \ie, the function $\vr$ constitutes of (possible multi-dimensional) components $\vr_p$. 
	Here, we assume that Lipschitz constants $L_p > 0$ are known and the functions $\vr_p : \Omega \rightarrow \reals^{m_p}$, $m_p \in \naturals$, can be evaluated. 
	Then, we can re-write \labelcref{original} as 
	\begin{equation}
		\label{several_ineq}
		\begin{aligned}
			\min_{\vx}\ & f(\vx) & \\
			\text{s.t.}\ & \vr_p(\vx) \leq \veczero, & \text{for } p = 1,\dots,R,\\
			& \vx \in \Omega.
		\end{aligned}\tag{$\mathrm{P}_R$}
	\end{equation}
	The question arises whether and how the performance of~\labelcref{alg:nic} can profit from this situation. 
	
	First, the method requires adjustments.
	Given the solution $\incumbent$ in iteration $k$, a check for each $\vr_p(\incumbent) \leq \veczero$ before step~\labelcref{step:nic_ret_sol} is required.
	Let $\Pi\define \{p \mid \vr_p(\incumbent) \nleq \veczero\}$.
	If $\Pi \neq \emptyset$, an analogous procedure could add a norm-induced cut for every $p \in \Pi$. 
	
	However, note that all such norm-induced cuts bound $\norm{\vx - \incumbent}$ from below by their radius. 
	This allows to define the maximal radius in iteration $k$ as $C_k \define \max_{p \in \Pi} \norm{\vr_p(\incumbent)_+}/L_p$ and to only add the cut
	\begin{equation*}
		\norm{\vx - \incumbent} \geq C_k.
	\end{equation*}
	One can see that the proofs above hold equally for such a procedure.
	
	Now, we aim to compare the magnitude of $C_k$ with the radius of~\labelcref{eq:def-nic} in~$\incumbent$, say $C \define \norm{\vr(\incumbent)_+}/L$.
	For sake of clarity, consider $\vr = (r_1, r_2)$ and let $\norm[i]{\fcdot}$ be the lower dimensional equivalent of $\norm{\fcdot}$, $i =1,\, 2$.
	Then, on the one hand, the assumed monotonicity of the norm gives
	\begin{equation*}
		\norm[1]{r_1(\vx) - r_1(\incumbent)} = \norm{(r_1(\vx), \veczero) - (r_1(\incumbent), \veczero)} \leq \norm{\vr(\vx) - \vr(\incumbent)} \leq L \norm{\vx - \incumbent},
	\end{equation*}
	and an analogous result for $r_2$.
	In particular,  the Lipschitz constant $L$ of $\vr$ is a Lipschitz constant for $r_1,\, r_2$ such that without loss of generality $L \geq L_1$, $L \geq L_2$. 
	On the other hand, using the monotonicity once more, it holds that
	\begin{equation*}
		\norm[1]{r_1(\incumbent)_+} = \norm{(r_1(\incumbent), \veczero)_+} \leq \norm{\vr(\incumbent)_+},
	\end{equation*}
	and analogously for $r_2$.
	
	In conclusion, it is unclear whether $C_k$ exceeds $C$ or vice versa. 
	This depends heavily on the difference between the known Lipschitz constants $L_p$ and $L$ as well as on the violation caused by the current point.
	This ambiguity is demonstrated in a specific instance in~\Cref{ex:berthold}.

\end{remark}
	
\begin{remark}
	If $\vr$ in \labelcref{original} maps from a lower dimensional subset of $\Omega$ into the $\reals^m$, 
	we can re-write $\vx = (\vx', \vx'') \in \Omega = \Omega' \times \Omega''$ with $\dim(\Omega'),\, \dim(\Omega'') \geq 1$, $\dim(\Omega') + \dim(\Omega'') = n$, and $\vr: \Omega' \rightarrow \reals^m$.
	Denoting the first $\dim(\Omega')$ elements of a vector with the prime, we can investigate the case of a point $\incumbent$ with $\vr((\incumbent)') \nleq \veczero$.
	As a vector $((\incumbent)', \vx'')$, for all $\vx'' \in \Omega''$, can never be a valid solution for \labelcref{original}, the norm-induced cut in $\incumbent$ can be added in $(\incumbent)'$ instead. 
	That is, we can add
	\begin{equation*}
		\norm{\vx' - (\incumbent)'} \geq \norm{\vr((\incumbent)')_+}/L,
	\end{equation*}
	with an appropriate norm for $\Omega'$.
	This may lead to faster convergence, as cuts are added in a potentially smaller subspace $\Omega'$.
\end{remark}

We showed that \labelcref{alg:nic} works correctly when we have a global solution method for~\labelcref{kth-relaxed-prob}. 
But the question arises whether the algorithm still fulfills the desired behavior when only a local solution method is available. 
In the following, we verify that if only local solutions can be found in step~\labelcref{step:nic_solve}, a point returned in step~\labelcref{step:nic_ret_sol} is indeed a locally optimal point.

\begin{theorem}
	\label{thm:local}
	Let \labelcref{original} be feasible, \ie, $Q \neq \emptyset$, and let step~\labelcref{step:nic_solve} produce a locally optimal point $\incumbent$ for \labelcref{kth-relaxed-prob}, $k \in \naturals_0$. 
	Then, if \labelcref{alg:nic} stops in step~\labelcref{step:nic_ret_sol}, it returns a locally optimal solution $\sol$. 
\end{theorem}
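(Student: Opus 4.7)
The plan is to observe that local optimality for the relaxed subproblem transfers directly to local optimality for the original problem, by virtue of the relaxation containment proved in \Cref{lem:relax}. Concretely, suppose \labelcref{alg:nic} stops in step~\labelcref{step:nic_ret_sol} at iteration $k \in \NN_0$, returning $\sol \define \incumbent$. The stopping condition ensures $\vr(\sol) \leq \veczero$, so together with $\sol \in Q_k \subseteq \Omega$ we obtain $\sol \in Q$, which already establishes feasibility.

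For local optimality, I would unfold the assumption on step~\labelcref{step:nic_solve}: there exists some $\eps > 0$ such that
\begin{equation*}
    f(\sol) \leq f(\vx) \quad \text{for all } \vx \in Q_k \cap B_\eps(\sol).
\end{equation*}
By \Cref{lem:relax}, $Q \subseteq Q_k$, hence $Q \cap B_\eps(\sol) \subseteq Q_k \cap B_\eps(\sol)$. Restricting the inequality above to the smaller set immediately yields
\begin{equation*}
    f(\sol) \leq f(\vx) \quad \text{for all } \vx \in Q \cap B_\eps(\sol),
\end{equation*}
which is precisely the definition of local optimality of $\sol$ for~\labelcref{original} with the same radius $\eps$.

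There is essentially no hard step here: the theorem is an almost-direct corollary of \Cref{lem:relax}, because local optimality over a superset trivially implies local optimality over any subset containing the candidate point. The only subtlety worth mentioning explicitly, to keep the exposition self-contained, is that the neighborhood radius $\eps$ from the oracle's guarantee can be reused verbatim; no shrinking is needed. I would therefore write the proof in a handful of lines, emphasizing the role of $Q \subseteq Q_k$ and contrasting it with \Cref{thm:opt}, where the stronger \emph{global} optimality hypothesis was required to compare $f(\sol)$ with the minimum over all of $Q$ rather than just with values in a neighborhood.
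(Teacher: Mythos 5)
Your proposal is correct and follows essentially the same route as the paper's own proof: both establish feasibility of $\sol$ via the stopping condition, then use $Q \subseteq Q_k$ from \Cref{lem:relax} to restrict the oracle's local-optimality guarantee on $Q_k \cap B_\eps(\sol)$ down to $Q \cap B_\eps(\sol)$ with the same radius $\eps$. The paper merely phrases the final step as a chain of two inequalities between minima rather than a direct restriction, which is an immaterial difference.
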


\begin{proof}
	Assume that \labelcref{alg:nic} stops in step~\labelcref{step:nic_ret_sol} in iteration $k \in \naturals_0$. 
	As $\incumbent$ is a locally optimal solution for \labelcref{kth-relaxed-prob}, there exists an $\eps > 0$ such that
	\begin{equation*}
		f(\incumbent) = \min_{\vx \in B_\eps(\incumbent) \cap Q_k} f(\vx).
	\end{equation*}
	By \Cref{thm:feas} $\incumbent \in Q$ and, thus, $\incumbent \in Q\cap B_\eps(\incumbent)$. 
	We infer 
	\begin{equation*}
		\min_{\vx \in Q \cap B_\eps(\incumbent)} f(\vx) \leq f(\incumbent).
	\end{equation*}
	Further, the result from \Cref{lem:relax} implies $Q \cap B_\eps(\incumbent) \subseteq Q_k \cap B_\eps(\incumbent)$. 
	We can derive
	\begin{equation*}
		f(\incumbent) = \min_{\vx \in Q_k\cap B_\eps(\incumbent)} f(\vx) \leq \min_{\vx \in Q \cap B_\eps(\incumbent)} f(\vx),
	\end{equation*}
	and, thus, conclude that 
	\begin{equation*}
		\min_{\vx \in Q \cap B_\eps(\incumbent)} f(\vx) = f(\incumbent).
	\end{equation*}
	Therefore, $\incumbent$ is a local optimum for \labelcref{original} and the proof is complete.
\end{proof}

If \labelcref{alg:nic} does not terminate in step~\labelcref{step:nic_ret_sol} while returning a point, we can, in general, not guarantee the local optimality of the accumulation points. On the contrary, the sequence might even converge to the ``worst possible'' point, as we show in \Cref{ex:local}.
We conclude the section with a remark regarding alternatives for the field $F$.

\begin{remark}
	\label{rem:integers-valid}
	Note that replacing $F$ with the ring $\integers$ does not change the validity of the computations above. 
	That is, one can consider $\Omega$ as a subset of a mixed-integer domain as well.
\end{remark}

\section{Termination and Problem Complexity}
\label{sec:termination}

This section discusses a termination criterion for \labelcref{alg:nic} based on the properties of~$\vr$ in case of infeasibility in \labelcref{original} and the problem complexity.
We restrict ourselves to $F = \reals$ if not stated otherwise.

\subsection{Termination for Infeasible Problems}
\label{subsec:termination-infeasible}

Let \labelcref{original} be infeasible, \ie, $Q = \emptyset$. 
Hence, $\vr(\vx) \nleq \veczero$ and $\norm{\vr(\vx)_+} > 0$ for all $\vx \in \Omega$. 
Due to the compactness of $\Omega$ and the continuity of $\norm{\vr(\fcdot)_+}$, as noted in the proof of~\Cref{thm:feas}, there exists $\delta \coloneqq \min_{\vx \in \Omega} \norm{\vr(\vx)_+} >0$. 

Let $\infseries$ denote a sequence defined by~\labelcref{alg:nic}.
As any new solution $\vx^i$ must satisfy~\labelcref{eq:def-nic} with respect to all previous solutions $\vx^0, \dots, \vx^{i-1}$ (compare \Cref{rem:consec_sets}), 
we can derive for all ${l \in \naturals}$ that 
\begin{equation*}
	\norm{\vx^l - \vx^i} > \frac{\delta}{L}, \qquad \text{for all } i = 0, \dots, l - 1.
\end{equation*} 
Thus, the number of iterations until termination can be bounded by the maximal number of points in $\Omega$ which have a pairwise distance of more than $\delta/L$ with respect to the norm.
This refers to a maximal $\delta/L$-packing of $\Omega$ and was formalized in \Cref{def:pack}.

In particular, our aim is to compute $\packnum$
or an upper bound for it. 
To the best of our knowledge, there is no such bound for a general normed finite-dimensional vector space. 
Though when restricting $X$ to $\reals^n$, the situation differs.

\begin{lemma}[\cite{Tikhomirov1993, Han2021}]
	\label{rem:termination_bound}
	Let $X = \reals^n$ and $\vol(\,\cdot\,)$ denote the Lebesgue measure (``volume''). 
	Further, we define $B \coloneqq B_1$ as the unit ball under the considered norm~$\norm{\,\cdot\,}$ and set ${B/2 \coloneqq B_{1/2}}$. 
	Then, it holds 
	\begin{equation}
		\label{ieq:volume}
		\packnum \leq T,
	\end{equation}
	where
	\begin{equation*}
		T \coloneqq \frac{\vol(\Omega + (\delta/L)B/2)}{\vol((\delta/L)B/2)},
	\end{equation*}
	and the sum of two sets is considered to be the Minkowski sum.
\end{lemma}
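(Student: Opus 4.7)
The plan is to prove the bound by a direct volume-comparison argument, which is the standard proof of this classical packing inequality.

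First, I would unpack the definitions. By \Cref{def:pack}, $\packnum$ is the maximal cardinality of a $(\delta/L)$-packing of $\Omega$, so there exists a set $\{\vx^0,\dots,\vx^N\} \subseteq \Omega$ with $N+1 = \packnum$ such that the open balls $B_{(\delta/L)/2}(\vx^i)$ are pairwise disjoint. Since each $\vx^i \in \Omega$ and, with the notation $B = B_1$, one has
\[
 B_{(\delta/L)/2}(\vx^i) = \vx^i + (\delta/L)B/2,
\]
it follows directly from the definition of the Minkowski sum that
\[
 B_{(\delta/L)/2}(\vx^i) \subseteq \Omega + (\delta/L)B/2 \qquad \text{for all } i = 0,\dots,N.
\]

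Next, I would apply the Lebesgue measure. Taking the union over $i$ and using pairwise disjointness together with countable additivity gives
\[
 \sum_{i=0}^{N} \vol\!\bigl(B_{(\delta/L)/2}(\vx^i)\bigr) \;=\; \vol\!\biggl(\bigcup_{i=0}^{N} B_{(\delta/L)/2}(\vx^i)\biggr) \;\leq\; \vol\!\bigl(\Omega + (\delta/L)B/2\bigr),
\]
where the inclusion step from the preceding paragraph justifies the last inequality. Translation invariance of Lebesgue measure implies that every ball $B_{(\delta/L)/2}(\vx^i)$ has the same volume as $(\delta/L)B/2$, so the left-hand side equals $(N+1)\,\vol((\delta/L)B/2)$. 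Dividing by the (strictly positive) volume of $(\delta/L)B/2$ yields $N+1 \leq T$, which is exactly \labelcref{ieq:volume}.

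I do not expect any serious obstacle here; the argument is essentially one line once the right inclusion is written down. The only points that need a little care are (i) checking that the inclusion $B_{(\delta/L)/2}(\vx^i) \subseteq \Omega + (\delta/L)B/2$ is valid for open balls and the Minkowski sum as defined, and (ii) ensuring that $\Omega + (\delta/L)B/2$ is Lebesgue-measurable, which follows because $\Omega$ is compact and $(\delta/L)B/2$ is open, making the sum an open (hence Borel) set. Neither raises difficulties, so the proof is essentially the chain of inclusions and the disjointness-plus-translation-invariance argument outlined above.
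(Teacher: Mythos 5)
Your proof is correct and is exactly the standard volume-comparison argument behind this classical bound; the paper itself gives no proof but defers to \cite{Tikhomirov1993, Han2021}, where the same disjointness-plus-Minkowski-inclusion reasoning appears. The only cosmetic point is the off-by-one in the paper's \Cref{def:pack} (a packing ``of size $k$'' has $k+1$ points); your reading of $\packnum$ as the cardinality is the intended one and does not affect the bound.
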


Depending on the structure of $\Omega$ and the specific choice of norm, this leads to upper bounds dependent on $\delta$ and $L$. 
We have computed some examples in~\Cref{sec:volume_ex} and give an overview of the results in \Cref{tab:iter_bounds}. 

\begin{remark}
	We note that one can follow a similar line of argumentation even for arbitrary fields by leveraging an interesting relation between the packing and covering numbers $M(\eps)$ and $N(\eps)$.
	In particular, for normed spaces, it is well known that $M(2\eps) \leq N(\eps)$ for any $\eps > 0$ (see, \eg,~\cite{Tikhomirov1993}) and, thus, $M(\delta/L) \leq N(\delta/(2L))$. 
	Then, finite termination per se is already guaranteed if there exists a finite $\delta/(2L)$-covering, and the number of covers $N(\delta/(2L))$ serves as an upper bound for the running time.
	
	For a finite field, this is directly bounded by the number of elements, as none of them is visited twice. 
	For infinite ones with an isomorphism $\phi$ to $\QQ$ (countable) or $\RR$ (uncountable), termination itself follows an application of the theorem of Heine-Borel (see, \eg, \cite[Theorem~2.41]{Rudin1976}). 
	This claims the existence of a finite cover in $\RR^n$ and, vice versa, in the vector space induced by the field.
	A finite bound for any other field is dependent on the field's structure and -- to the best of our knowledge -- requires a case-by-case analysis.	
\end{remark}

\begin{table}
	\caption{Examples for $T$ in \labelcref{ieq:volume} for $\delta/L < 1$ depending on the structure of $\Omega$ and the choice of norm.}
	\label{tab:iter_bounds}
	\centering
	\begin{tabular}{llc}
		\toprule
		$\Omega$ & $\norm{\,\cdot\,}$ & $T$ \\
		\midrule
		$\Omega_1 \coloneqq \prod_{j = 1}^n[a_j,\, b_j]$ & $\norm[\infty]{\,\cdot\,}$ & $\prod_{j = 1}^n \left[(L/\delta)(b_j - a_j) + 1\right]$ \\
		$\Omega_2 \coloneqq \prod_{j = 1}^n[a_j, \, b_j] \cap \integers^n$ & $\norm[\infty]{\,\cdot\,}$ & $\abs{\Omega_2}$ \\
		$\Omega_3 \coloneqq \Defset{\vx}{\norm[2]{\vx} < D}$ & $\norm[2]{\,\cdot\,}$ & $\left(2LD/\delta + 1\right)^n$\\
		\bottomrule
	\end{tabular}
\end{table}

\subsection{Problem Complexity}

It is inadequate to expect to find exact solutions for abstract problem classes, as we consider them, in finite time; see \cite{Nemirovsky1983,Vavasis1991}.
Hence, for the investigation of problem complexity, we only consider approximate solutions.
\begin{definition}
	\label{def:approx_sol}
	Let $\eps > 0$ and let $r_q$ denote the $q$-th component of $\vr$, $q = 1,\dots,m$.
	Then, we call $\sol \in \Omega$ an \emph{$\eps$-approximate solution} for \labelcref{original} if and only if
	\begin{equation}
		\label{eq:approx_subcomponents}
		\forall q = 1,\dots,m \; : \; r_q(\sol) \leq \varepsilon.
	\end{equation}
\end{definition}

As shown in~\cite{Vavasis1991}, continuity of $f$ is not sufficient to find an approximate solution in a finite number of steps.
Thus, we assume the objective $f$ to be also Lipschitz. 
Additionally, we assume the interior of $\Omega$ to be non-empty, 
and that there exists an $\eps > 0$ such that an optimal solution $\sol$ to \labelcref{original} is contained in an $\eps$-ball inside~$\Omega$.

This ensures that if \labelcref{alg:nic} adds norm-induced cuts with a radius of $\eps$,
it is not possible to cut off every neighboring point in $\Omega$.

For the lower bound on the problem complexity, we use a result from~\cite{Nemirovsky1983}. 
We define the following constants regarding $\Omega$.

\begin{definition}[\cite{Nemirovsky1983}]
	\label{def:rad_asph}
	Let $\eps > 0$ and let $\Omega$ have a non-empty interior.
	Then, we set the \emph{radius of $\Omega$} as 
	\begin{equation*}
		\rho(\Omega) \coloneqq \min\defset{s}{\exists \vx \in \Omega : \norm{\vx - \vy} \leq s, \text{ for all } \vy \in \Omega}.
	\end{equation*}
	Further, the \emph{asphericity of $\Omega$} is defined as 
	\begin{equation*}
		\alpha(\Omega) = \inf \defset{\beta}{\exists \vx,\, \vy \in \Omega,\, s \geq 0 : B_s(\vx) \subseteq \Omega \subseteq B_{\beta s}(\vy)}.
	\end{equation*}
	In other terms, the asphericity defines the \emph{ratio of radii of the minimal ball containing $\Omega$ and the maximal ball contained in $\Omega$}.
\end{definition}

Since the magnitude of each $r_q$ has a great influence on how to choose a proper approximation guarantee $\eps$, 
we use the radius $\rho(\Omega)$ and assume $\vr$ to be divided by the constant $\rho(\Omega) L$, ``normalizing'' the inequality, \ie, $\vr \leftarrow \vr/(\rho(\Omega)L)$.

Now, when dealing with problem complexity, we consider the number of oracle calls (step~\labelcref{step:nic_solve}) until achieving an approximate solution.
We denote this number by $T(\eps)$ for $\eps > 0$.
So far, the \labelcref{alg:nic} method has been developed to find exact solutions.
As such, we modify it as follows:
\begin{itemize} 
	\item[a)] We only check for the component-wise inequalities~\labelcref{eq:approx_subcomponents} in the if-clause before step~\labelcref{step:nic_ret_sol},
	\item[b)] and we add the norm-induced cut from \Cref{def:cut} with respect to a slightly changed radius of $\max(\norm{\vr(\incumbent)_+}/L,\, \eps)$.
\end{itemize}

Given these modifications, the number of oracle calls required to find the $\eps$-approximate solution is given by the following theorem. 
\begin{theorem}[\cite{Nemirovsky1983}]
	Let $\eps > 0$ and let $\Omega$ have a non-empty interior. Then,
	\begin{equation*}
		\label{ieq:complex_lb}
		T(\eps) \geq \left(\frac{c}{\alpha(\Omega)}\right)^n \frac{1}{\eps^n},
	\end{equation*}
	with some constant $c > 0$.
\end{theorem}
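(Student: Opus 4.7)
My plan is to run the classical adversarial-oracle argument of Nemirovsky and Yudin: if the query budget $T$ is too small, one can exhibit many feasible Lipschitz instances of~\labelcref{original} that are pairwise indistinguishable from the algorithm's point of view, while their $\eps$-approximate feasible sets are pairwise disjoint. Since the algorithm must return the same candidate on indistinguishable transcripts, it would be forced to be incorrect on all but one of the instances, contradicting correctness.

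First, I would use the asphericity to place a large inscribed ball: by~\Cref{def:rad_asph} there exist $\vect{x}_0\in\Omega$ and $s > 0$, arbitrarily close to $\rho(\Omega)/\alpha(\Omega)$, with $B_s(\vect{x}_0)\subseteq\Omega$. Inside $B_s(\vect{x}_0)$ I fix a maximal $4\eps$-packing $\{\vect{z}_1,\dots,\vect{z}_N\}$. A volume-ratio lower bound in the spirit of~\Cref{rem:termination_bound} yields
$$
N \;\geq\; \left(\frac{s}{4\eps}\right)^{\!n},
$$
and after absorbing $\rho(\Omega)$ through the normalization $\vr\leftarrow\vr/(\rho(\Omega)L)$ this becomes $N \geq (c/(\alpha(\Omega)\eps))^n$ with an absolute constant $c>0$ determined only by the norm.

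Next, I attach to each center $\vect{z}_i$ the scalar Lipschitz instance $r^{(i)}(\vx)\define\norm{\vx-\vect{z}_i}-\eps$, whose $\eps$-approximate feasible set is precisely $B_{2\eps}(\vect{z}_i)$; the $4\eps$-packing property makes these sets pairwise disjoint. I then form the envelope $r^{\dagger}(\vx)\define\max_{i=1,\dots,N} r^{(i)}(\vx)$, which is still $1$-Lipschitz and, crucially, coincides with each $r^{(i)}$ on $\Omega\setminus\bigcup_{j\neq i}B_{2\eps}(\vect{z}_j)$. Simulating any algorithm against the adversary that always answers according to $r^{\dagger}$, each of the $T$ queries $\vy^1,\dots,\vy^T$ hits at most one of the disjoint balls $B_{2\eps}(\vect{z}_j)$, so if $T < N-1$ a pigeonhole argument delivers two untouched indices $i^\star,\, i^{\star\star}$. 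Under both $r^{(i^\star)}$ and $r^{(i^{\star\star})}$ the transcripts are identical to the one produced under $r^{\dagger}$, so the algorithm returns the same candidate $\sol$; since the two target balls are disjoint, $\sol$ is $\eps$-approximate for at most one of them, which yields the contradiction. Hence $T(\eps) \geq N - 1 \geq (c/\alpha(\Omega))^n \eps^{-n}$ after a harmless adjustment of $c$.

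The main obstacle is the bookkeeping of the absolute constant $c$: the asphericity estimate only supplies an inscribed radius $s \approx \rho(\Omega)/\alpha(\Omega)$ up to slack, and the packing volume ratio introduces a norm-dependent factor that must be propagated cleanly through the scaling $\vr\leftarrow\vr/(\rho(\Omega)L)$. A secondary subtlety is verifying that each candidate instance $r^{(i)}$ meets the standing assumption that an optimizer of~\labelcref{original} sits in an $\eps$-ball inside~$\Omega$; this is automatic provided the packing centers are drawn from $B_{s-\eps}(\vect{x}_0)$, which is possible for small enough $\eps$ and costs only a constant factor in $N$. Extending the argument from deterministic to randomized algorithms requires the standard step of letting the adversary choose $i^\star$ uniformly at random and invoking Yao's principle, but the core packing/indistinguishability skeleton is unchanged.
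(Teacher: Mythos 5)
First, note that the paper does not actually prove this statement: it is imported verbatim from Nemirovsky--Yudin, so you are supplying a proof where the paper only cites one. Your overall strategy (inscribed ball via the asphericity, an $\eps$-scale packing, a resisting oracle, and an indistinguishability/pigeonhole argument) is indeed the correct skeleton of the classical information-based lower bound, and the volume count $N \gtrsim (s/\eps)^n$ with $s\approx\rho(\Omega)/\alpha(\Omega)$, combined with the normalization $\vr\leftarrow\vr/(\rho(\Omega)L)$, does produce the right order $(c/\alpha(\Omega))^n\eps^{-n}$.

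However, there is a genuine gap in the central indistinguishability step. With $r^{(i)}(\vx)=\norm{\vx-\vect{z}_i}-\eps$, the envelope $r^{\dagger}=\max_i r^{(i)}$ equals the distance to the \emph{farthest} center minus $\eps$; it does \emph{not} coincide with $r^{(i)}$ on $\Omega\setminus\bigcup_{j\neq i}B_{2\eps}(\vect{z}_j)$ (already at $\vx=\vect{z}_i$ one has $r^{(i)}(\vect{z}_i)=-\eps$ while $r^{\dagger}(\vect{z}_i)\geq 3\eps$), and replacing $\max$ by $\min$ does not help, since $\min_j\norm{\vx-\vect{z}_j}$ agrees with $\norm{\vx-\vect{z}_i}$ only where $\vect{z}_i$ happens to be the nearest center. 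Worse, the uncapped cones $r^{(i)}$ and $r^{(j)}$ differ at essentially every point of $\Omega$, so a single oracle evaluation at a generic query reveals which instance is in play and the pigeonhole argument collapses: the transcripts under $r^{(i^\star)}$ and $r^{(i^{\star\star})}$ are \emph{not} identical to the one produced by your adversary. The standard repair is to truncate each instance at a common plateau, e.g.\ $r^{(i)}(\vx)=\min\bigl(\norm{\vx-\vect{z}_i}-\eps,\;3\eps\bigr)$, and to take an $8\eps$-packing so that the supports of the perturbations $B_{4\eps}(\vect{z}_i)$ are pairwise disjoint; then all instances agree with the constant $3\eps$ outside those disjoint balls, the resisting oracle answers $3\eps$ everywhere, each query eliminates at most one index, and your pigeonhole and disjoint-feasible-set argument goes through (costing only the constant $c$). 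Your remaining caveats (slack in the inscribed radius, centers drawn from a slightly smaller ball, Yao's principle for randomized algorithms) are fine and only affect constants.
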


For the upper bound, we use \Cref{rem:termination_bound} with $\eps$ instead of $\delta/L$ .
That is, instead of giving a termination bound dependent on the values of $\vr$, 
we do it in terms of the approximation guarantee $\eps$.
We formalize this in the following.

\begin{theorem}
	Let $\eps > 0$ and let $\Omega$ have a non-empty interior. Then,
	\begin{equation*}
		\label{ieq:complex_ub}
		T(\eps) \leq (2\rho(\Omega) + \eps)^n \frac{1}{\eps^n}.
	\end{equation*}
\end{theorem}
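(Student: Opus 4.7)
The plan is to reduce the question to the volumetric packing bound already established in Lemma \ref{rem:termination_bound}, after observing that the modification (b) forces every iterate to be at distance at least $\eps$ from every previous one.

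First I would note that, by modification (b), whenever the algorithm has not yet terminated the cut added at iteration $i$ has radius at least $\eps$. Hence, arguing exactly as in \Cref{subsec:termination-infeasible} but with $\delta/L$ replaced by $\eps$, the sequence $\infseries$ of iterates produced up to the moment of termination satisfies
\begin{equation*}
	\norm{\vx^l - \vx^i} > \eps \myForAll i = 0, \dots, l-1.
\end{equation*}
In particular, the iterates form an $\eps$-packing of $\Omega$, so $T(\eps)$ is bounded by the packing number $M(\Omega, \norm{\fcdot}, \eps)$.

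Next I would invoke Lemma \ref{rem:termination_bound} (applied with $\delta/L \define \eps$) to obtain
\begin{equation*}
	T(\eps) \;\leq\; M(\Omega, \norm{\fcdot}, \eps) \;\leq\; \frac{\vol\bigl(\Omega + \eps B/2\bigr)}{\vol\bigl(\eps B/2\bigr)}.
\end{equation*}
The denominator simplifies immediately: since $\eps B/2 = B_{\eps/2}$, homogeneity of the Lebesgue measure gives $\vol(\eps B/2) = (\eps/2)^n \vol(B)$.

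For the numerator I would use the definition of $\rho(\Omega)$ from \Cref{def:rad_asph}: there exists $\bar\vx \in \Omega$ with $\Omega \subseteq B_{\rho(\Omega)}(\bar\vx)$. Consequently
\begin{equation*}
	\Omega + \eps B/2 \;\subseteq\; B_{\rho(\Omega)}(\bar\vx) + B_{\eps/2} \;\subseteq\; B_{\rho(\Omega) + \eps/2}(\bar\vx),
\end{equation*}
so $\vol(\Omega + \eps B/2) \leq (\rho(\Omega) + \eps/2)^n \vol(B)$. Combining the two volume estimates yields
\begin{equation*}
	T(\eps) \;\leq\; \frac{(\rho(\Omega) + \eps/2)^n \vol(B)}{(\eps/2)^n \vol(B)} \;=\; \left(\frac{2\rho(\Omega) + \eps}{\eps}\right)^{\!n} \;=\; \frac{(2\rho(\Omega) + \eps)^n}{\eps^n},
\end{equation*}
which is the claimed bound. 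The only point requiring a bit of care is the Minkowski-sum inclusion, which follows cleanly from translation invariance of the chosen norm; no real obstacle arises beyond correctly propagating the replacement $\delta/L \leadsto \eps$ through the packing argument.
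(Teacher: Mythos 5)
Your proposal is correct and follows essentially the same route as the paper: reduce $T(\eps)$ to the $\eps$-packing bound of \Cref{rem:termination_bound} (with $\eps$ in place of $\delta/L$, justified by modification (b)), enclose $\Omega$ in $B_{\rho(\Omega)}(\bar{\vx})$ via \Cref{def:rad_asph}, and compute the volume ratio by homogeneity of the Lebesgue measure. The only difference is that you spell out the packing argument and the Minkowski-sum inclusion slightly more explicitly than the paper does, which is harmless.
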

\begin{proof}
	With the definition of the radius in \Cref{def:rad_asph}, there exists $\bar{\vx} \in \Omega$ such that $\Omega \subseteq B_{\rho(\Omega)}(\bar{\vx})$.
	Together with \Cref{rem:termination_bound}, we conclude
	\begin{equation*}
		\begin{aligned}
			T(\eps) & \leq \frac{\vol(\Omega + \eps B/2)}{\vol(\eps B/2)} \leq \frac{\vol(B_{\rho(\Omega)} + \eps B/2)}{\vol(\eps B/2)} \\
			& = \frac{\vol(B_{\rho(\Omega) + \eps/2})}{\vol(\eps B/2)} = \left(\frac{\rho(\Omega) + \eps/2}{\eps/2}\right)^n \\
			& = \left(\frac{2\rho(\Omega)}{\eps} + 1\right)^n = (2\rho(\Omega) + \eps)^n \frac{1}{\eps^n}.
		\end{aligned}
	\end{equation*}
\end{proof}

In conclusion, considering the dimension $n$ fixed, we observe that the lower and upper complexity bounds show identical order of magnitude in $\eps$. 
\section{Computational Examples and Illustrations}
\label{sec:examples}

Having investigated the problem complexity, we now present three examples with distinct purposes.
For an overview, we summarize them in the following.
\begin{description}
	\item[\Cref{ex:nic}] We demonstrate the behavior of~\labelcref{alg:nic} on a two-dimensional problem that allows us to illustrate the process.
	\item[\Cref{ex:local}] We show that dropping the global optimality assumption from~\Cref{thm:opt} on step~\ref{step:nic_solve} of~\labelcref{alg:nic} can lead to convergence to a feasible but ``worst possible'' point.
	\item[\Cref{ex:berthold}] We draw a comparison between the availability of tight Lipschitz constants for components of $\vr$  as well as for the entire function $\vr$ as discussed in theory in~\Cref{rem:separate-cuts} on a problem from the literature.
\end{description}

\begin{example}[Illustration of~\labelcref{alg:nic}]
	\label{ex:nic}
	
	Consider the nonlinear problem
	\begin{equation}
		\label{example_nic}
		\begin{aligned}
			\min_{x_1,\, x_2}\ & \abs{x_1 - x_2} + x_1 \\
			\text{s.t.}\ & -\sin(x_1) - x_2 \leq 0,\\
			& x_1, \, x_2 \in [-1, 1].
		\end{aligned}
		\tag{$\mathrm{P}_\mathrm{sin}$}
	\end{equation}
	
	Following our notation, we have $f(\vx) = f(x_1, x_2) = \abs{x_1 - x_2} + x_1$, ${r(\vx) = -\sin(x_1) -x_2}$, and $\Omega = [-1, 1]^2$. 
	The optimal solution of problem~\labelcref{example_nic} is $\sol = (0, 0)$. 
	See \Cref{subfig:ex_nic_feas} for a graphical representation of $\sol$ and the feasible region $Q = \Omega \cap \Defset{\vx \in \reals^2}{r(\vx) \leq 0}$.
	In the following, we omit the bold format and the subscript `+', as $r$ is 1-dimensional.
	
	We specify an accuracy of $\eps = 10^{-4}$.
	Further, we consider the $\reals^2$ with the Euclidean norm $\norm[2]{\,\cdot\,} = \sqrt{x_1^2 + x_2^2}$ and the $\reals$ with the absolute value $\abs{\,\cdot\,}$.
	Computing the Lipschitz constant by following~\Cref{subsubsec:L-constant-cont-diff}, we receive $L = \sqrt{2}$.
		
	In the first iteration $k \gets 0$, the algorithm solves ${\min\Defset{f(\vx)}{\vx \in Q_0 = \Omega}}$.
	This returns a solution $\incumbent = (x_1^k, x_2^k)$, for which $r$ is evaluated, and a radius of the norm-induced cut if the violation exceeds the accuracy.
	The corresponding numbers are summarized in~\Cref{tab:ex1}.
	
	\begin{table}
		\caption{Intermediate solutions and violations for \Cref{ex:nic}.}\label{tab:ex1}
		\begin{tabular}{cccc}
			\toprule
			$k$ & $(x_1^k, x_2^k)$ & $r(\incumbent)$ & $\abs{r(\incumbent)}/L$ \\ 
			\midrule
			$0$ & $(-1 ,-1)$ & $\approx 1.84$ & $\approx 1.30$ \\
			$1$ & $\approx(-0.08, -0.08)$ & $\approx 0.16$ & $\approx 0.11$\\
			$2$ & $\approx(4 \cdot 10^{-5}, 4 \cdot 10^{-5})$ & $\approx 8 \cdot 10^{-5}$ & -\\
			\bottomrule
		\end{tabular}
	\end{table}
	
	We illustrate the cutting procedure in \Cref{subfig:ex_nic_iter}.
	In the first iteration, a linear program is solved, resulting in $\vx^0$. 
	As observed, its distance to the feasible region can be considered large such that the first cut, which is illustrated as a dashed circle around its center $\vx^0$, shows a radius of similar magnitude.
	Solving the following reverse convex subproblem returns $\vx^1$. 
	The corresponding violation of $r$ is still beyond the accuracy such that another cut (dotted circle) is applied.
	After solving the subproblem for a third time to global optimality, the point $\vx^2$ is returned, which is sufficiently near the global optimal point and thus suffices for an $\eps$-optimal solution.
	
	\begin{figure}
		\centering
		\begin{subfigure}{0.48\textwidth}
				\centering
				\begin{tikzpicture}[scale=2]
						\clip (-1.4,-1.2) rectangle (1.4,1.2);
						\draw[step=.5,gray,very thin] (-1.3, -1.2) grid (1.3, 1.2); 
						\draw[->] (-1.4, 0) -- (1.4, 0); 
						\draw[->] (0, -1.2) -- (0, 1.2); 
						\draw (-1, -2pt) node[anchor=north] {-1} -- (-1,2pt);
						\draw (1, -2pt) node[anchor=north] {1}-- (1, 2pt);
						\draw (-2pt, -1) -- (2pt,-1) node[anchor=west] {-1};
						\draw (-2pt, 1) -- (2pt, 1) node[anchor=west] {1};
						
						\draw[name path=sine, thick] (-1.57, 1) cos (0,0) sin (1.57, -1); 
						\draw[thick] (-1,0.84) -- (-1, 1) -- (1, 1) -- (1, -0.84);
						
						\begin{scope}
								\clip (-1,-1) rectangle (1,1);
								\fill[pattern=north east lines] (-1.57, 1) cos (0,0) sin (1.57, -1) -- (1, 1) -- (-1, 1);
							\end{scope}
						
						\filldraw [black] (0,0) circle (1pt) node[below left, black] {$\sol$};
						
					\end{tikzpicture}
				\subcaption{Feasible region (hatched) and optimal solution $\sol = (0, 0)$ of \labelcref{example_nic}.}
				\label{subfig:ex_nic_feas}
			\end{subfigure}
		\hfill
		\begin{subfigure}{0.48\textwidth}
				\centering
				\begin{tikzpicture}[scale=2]
						\clip (-1.4,-1.2) rectangle (1.4,1.2);
						\draw[step=.5,gray,very thin] (-1.3, -1.2) grid (1.3, 1.2); 
						\draw[->] (-1.4, 0) -- (1.4, 0); 
						\draw[->] (0, -1.2) -- (0, 1.2); 
						\draw (-1, -2pt) node[anchor=north] {-1} -- (-1,2pt);
						\draw (1, -2pt) node[anchor=north] {1}-- (1, 2pt);
						\draw (-2pt, -1) -- (2pt,-1) node[anchor=west] {-1};
						\draw (-2pt, 1) -- (2pt, 1) node[anchor=west] {1};
						
						\draw[thick] (-1, -1) rectangle (1, 1);
						
						\draw[dashed] (-1, -1) circle [radius=1.30];
						\draw[dotted,thick] (-0.08, -0.08) circle [radius=0.11];
						
						\filldraw[black] (-1, -1) circle (1pt) node[above right, black] {$\vx^0$};
						\filldraw[black] (-.08, -.08) circle (1pt) node[below left, black] {$\vx^1$};
						\filldraw[red] (0, 0) circle (1pt) node[above right, black] {$\vx^2$};
					\end{tikzpicture}
				\subcaption{Incumbents $\vx^0$, $\vx^1$, $\vx^2$ with the respective cuts for \labelcref{alg:nic} on \labelcref{example_nic}.}
				\label{subfig:ex_nic_iter}
			\end{subfigure}
		\caption{Illustration of \Cref{ex:nic}.}
		\label{fig:ex_nic}
	\end{figure}
	
\end{example}

We note that the original problem in the example above cannot be treated by standard solvers such as Gurobi~\cite{Gurobi801} (before version 11) without reformulation.
The created subproblems, however, can be tackled by such methods, since the occurring constraints are quadratic ones.

\begin{example}[Necessity of global optimality]
	\label{ex:local}
	Here, we consider a one dimensional problem in $\reals$ with the absolute value~$\abs{\,\cdot\,}$ which reads
	\begin{equation}
		\label{example_local}
		\begin{aligned}
			\min\ & -\abs{x} \\
			\text{s.t.}\ & \frac13 x^3 \geq 0, \\
			& x \in [-1, 1].
		\end{aligned}
		\tag{$\mathrm{P}_\mathrm{bad}$}
	\end{equation}
	Following our notation, we have $f(x) = -\abs{x}$, $r(x) = -\frac13 x^3$, and $\Omega=[-1, 1]$. 
	Computing the Lipschitz constant according to~\Cref{subsubsec:L-constant-cont-diff} gives $L = 1$.
	Again, we omit the bold format and the subscript `+' due to the 1-dimensional $r$.
	
	In order to get a better understanding of problem~\labelcref{example_local}, we can rewrite it as $\max\Defset{x}{x \in [0, 1]}$ and see that its optimal point is at $x^* = 1$. 
	The ``worst possible'' point in terms of the optimization problem is considered $\hat{x} = 0$, as this is the maximum of problem~\labelcref{example_local}.
	We will show that \labelcref{alg:nic} converges to $\hat{x}$ if the subproblem in step~\labelcref{step:nic_solve} is only solved to local optimality, which is in contrast to the assumptions in~\Cref{thm:opt}.
	
	In this particular case, let's assume step~\labelcref{step:nic_solve} to always return a locally optimal solution in $[-1, 0)$ if it exists. 
	Therefore, relaxing the inequality $r(x) \leq 0$ in \labelcref{example_local}, we receive the solution $x^0 = -1$ of the corresponding subproblem with a violation of $r(x^0) = 1/3$.
	Adding the cut ${\abs{x - x^0} \geq r(x^0)/L = 1/3}$, we end up with $Q_1 = [-2/3, 1]$ as the feasible set of the next subproblem to investigate.
	We can show by induction 
	that the points produced by solving the subproblems satisfy the following relation
	\begin{equation}
		\label{ieq:ex_loc_iteration}
		x^{k+1} = x^k + r(x^k) = x^k - \frac13 (x^k)^3,
	\end{equation}
	for $k \in \naturals_0$.
	Using induction once more, we can prove that indeed $x^k \in [-1, 0)$ for all $k \in \naturals_0$.
	
	Now, we define the right-hand side of \labelcref{ieq:ex_loc_iteration} as the function $\nu(x) \coloneqq x - \frac13 x^3$.
	Note that this is a contraction on $[-1, 0]$.
	Therefore, we can use Banach's fixed-point theorem to see that $\nu$ has a unique fixed-point $\hat{x}$ on $[-1, 0]$.
	This fixed-point is indeed $\hat{x} = 0$, which can be seen by $\nu(0) = 0$.
	Further, Banach's theorem allows us to show that by starting with any $x^0 \in [-1, 0]$, the sequence defined by $x^{k+1} = \nu(x^k)$, which is equivalent to \labelcref{ieq:ex_loc_iteration}, converges to $\hat{x}$. 
	Therefore, $\lim_{k \rightarrow \infty} x^k = \hat{x} = 0$.
	
	In summary, this shows that the sequence produced by \labelcref{alg:nic} converges to a suboptimal point that can also be considered to be the ``worst possible'' point. 
	If we have a solution method returning a global optimal point, then the algorithm would return $x^* = 1$ in its second iteration at the latest.
	
\end{example}

We highlight that the feasibility of the limit is ensured independent of the optimality quality, as shown in~\Cref{thm:feas}.
Solving the subproblems to local optimality is only ensured to result in local optimal solutions if the algorithm stops in step~\labelcref{step:nic_ret_sol}.

Finally, we want to compare the performance when considering $\vr$ itself versus the components of $\vr$ separately with respect to the quality of available Lipschitz constants.
This was discussed in \Cref{rem:separate-cuts} and is made explicit here.

\begin{example}[Comparison of vector-valued versus component-wise constraints]
	\label{ex:berthold}
	We consider an example from the literature, in particular the continuous relaxation of Example~2.2 in~\cite{Berthold2014}.
	Following previous notation, we set 
	\begin{equation*}
		\vr(\vx) = \vr(x_1, x_2) = \begin{pmatrix} r_1(x_1, x_2) \\ r_2(x_1, x_2)\end{pmatrix} \coloneqq \begin{pmatrix}
			\cos(6x_1)/2 - x_2 + 1.8 \\
			-2\sin(4x_1)/\sqrt{x_1} + x_2 - 2
		\end{pmatrix},
	\end{equation*}
	which allows to give the problem in~\cite{Berthold2014} as
	\begin{equation}
		\label{example_berthold}
		\begin{aligned}
			\min_{x_1,\, x_2}\ & x_1 + 4x_2 \\
			\text{s.t.}\ & \vr(x_1, x_2) \leq \veczero,\\
			& x_1 \in [1, 10],\, x_2 \in [0, 4].
		\end{aligned}
		\tag{$\mathrm{P}_\mathrm{comp}$}
	\end{equation}
	As we want to make use of Gurobi~\cite{Gurobi801} in its ability as a non-convex quadratic solver for the subproblems, we tackle problem~\labelcref{example_berthold} with \ref{alg:nic} with respect to the Euclidean norm.
	In particular, squaring the latter leads to the cuts in the desired form for the solver.
	This is also applied when computing the Lipschitz constants in the way of~\Cref{subsubsec:L-constant-cont-diff}.
	The resulting (squared) values are $L_1^2 = 10$ and $L_2^2 \approx 42.83$ for $r_1$ and $r_2$, as well as $L^2  \approx 50.83$.
	
	Now, we leverage these values to relax constraint $\vr(x_1, x_2) \leq \veczero$ and apply norm-induced cuts of the form
	\begin{equation*}
		\norm[2]{\vx - \incumbent}^2 \geq \frac{\norm[2]{\vr(x^k_1, x^k_2)_+}^2}{L^2},
	\end{equation*}
	when considering $\vr$ as a vector-valued function, or 
	\begin{equation*}
		\norm[2]{\vx - \incumbent}^2 \geq \frac{\abs{r_1(x^k_1, x^k_2)}^2}{L_1^2}, \text{ and }
		 \norm[2]{\vx - \incumbent}^2 \geq \frac{\abs{r_2(x^k_1, x^k_2)}^2}{L_2^2},
	\end{equation*}
	if $r_1(x^k_1, x^k_2) > 0$ and $r_2(x^k_1, x^k_2) > 0$, respectively, when considering the components of $\vr$ separately.
	Running \ref{alg:nic} for 100 iterations with solving the subproblems to global optimality using Gurobi, we can compare the convergence of both alternatives to the optimal value in~\Cref{fig:ex-conv}.
	We note that the optimal objective value of Problem~\labelcref{example_berthold} is approximately $6.76$.
	
	\pgfplotstableread{data-2d-berthold.tex}{\tabletwo}
	\pgfplotstableread{data-1d-berthold.tex}{\tableone}
	\pgfplotstableread{data-2d-berthold_manip.tex}{\tabletwomanip}
	\pgfplotstableread{data-1d-berthold_manip.tex}{\tableonemanip}
	\begin{figure}
		\begin{subfigure}{0.49\textwidth}
			\centering
			\begin{tikzpicture}[scale=0.7]
				\begin{axis}[xmin = 0, xmax=100,xlabel=\textup{Iteration} ,ylabel=\textup{Objective},
					legend pos=south east]
					\addplot[mark=none, black, thick] coordinates {(0, 6.763847783176571) (99, 6.763847783176571)};
					\addlegendentry{\textup{opt. value}}
					\addplot[dashed] table [x = {iteration}, y = {objective value}] {\tableone};
					\addlegendentry{$r_1,\,r_2$\textup{-cuts}}
					\addplot[dotted, thick] table [x = {iteration}, y = {objective value}] {\tabletwo};
					\addlegendentry{$\vr$\textup{-cuts}}
				\end{axis}
			\end{tikzpicture}
			\caption{\labelcref{example_berthold} original.}
			\label{fig:ex-conv}
		\end{subfigure}
		\begin{subfigure}{0.49\textwidth}
			\centering
			\begin{tikzpicture}[scale=0.7]
				\begin{axis}[xmin = 0, xmax=100,xlabel=\textup{Iteration} ,ylabel=\textup{Objective},
					legend pos=south east]
					\addplot[mark=none, black, thick] coordinates {(0, 6.763847783176571) (99, 6.763847783176571)};
					\addlegendentry{\textup{opt. value}}
					\addplot[dashed] table [x = {iteration}, y = {objective value}] {\tableonemanip};
					\addlegendentry{$r_1,\,r_2$\textup{-cuts}}
					\addplot[dotted, thick] table [x = {iteration}, y = {objective value}] {\tabletwomanip};
					\addlegendentry{$\vr$\textup{-cuts}}
				\end{axis}
			\end{tikzpicture}
			\caption{\labelcref{example_berthold} manipulated.}
			\label{fig:ex-conv-manip}
		\end{subfigure}
		\caption{Objective value per iteration of \ref{alg:nic} on different versions of~\labelcref{example_berthold} when considering $\vr$ or its separate components.}
	\end{figure}
	
	One can observe that using the components of $\vr$ separately leads to faster convergence to the optimal objective value.
	In iteration 100, the relative difference of the optimal value of~\labelcref{example_berthold} to the current value of the relaxation~\labelcref{kth-relaxed-prob} is approximately 0.58\% when using $L_1$, $L_2$, and 18.55\% when using just $L$.
	This results from the additional information, \ie, tighter Lipschitz constants of the components.
	In particular, when only one constraint $r_1(\vx) \leq 0$ or $r_2(\vx) \leq 0$ is violated, considering them separately adds cuts with smaller Lipschitz constants, thus, a larger right-hand side than when considering only $\vr$.
	
	For the opposite observation in~\Cref{rem:separate-cuts}, consider the following manipulations to~\labelcref{example_berthold}.
	Let $r_1(\vx) = r_2(\vx) = \cos(6x_1)/2 - x_2 + 1.8$.
	The optimal objective value remains at approximately 6.76.
	For the Lipschitz constants, this gives $L^2 = 20$ as the smallest possible one.
	For $r_1$ and $r_2$, we assume to have a non-minimal Lipschitz constant $L_1^2 = L_2^2 = 15$ instead of 10 from above. 
	
	Applying the same procedure as above leads to the behavior illustrated in~\Cref{fig:ex-conv-manip}.
	Conversely to before, \labelcref{alg:nic} converges faster when considering $\vr$ as a whole, even though the effect is not as significant as before.
	For an interpretation, recall that informally the radius of a norm-induced cut is the violation divided by the Lipschitz constant.
	With this in mind, the violation of $\vr$ in the beginning appears to be larger than the difference of $L$ to $L_1$, $L_2$, leading to larger radii and equivalently an extended exclusion of the feasible region.
	As discussed in~\Cref{rem:separate-cuts}, the difference between considering $\vr$ or its components $r_1,\, r_2$ is thus dependent on the particular application and the availability of tight Lipschitz constants for each constraint function (component). 
	The Lipschitz constant might not even be available for the components of a vector-valued function, but only for the function itself, see~\Cref{subsubsec:L-constant-bilevel} for instance.

\end{example}

\section{Conclusion}
\label{sec:conclusion}

We have introduced the \labelcref{alg:nic} method in order to tackle the abstract problem class~\labelcref{original}.
Besides a continuous objective on a compact domain, the method requires a global Lipschitz constant for the constraint functions and an oracle to solve the subproblems~\labelcref{kth-relaxed-prob} globally.
Under these assumptions, we have proven the correctness of \labelcref{alg:nic} in terms of returning an optimal solution when terminating or,
if not, creating a sequence whose accumulation points are all optimal.
This has been complemented by discussions about the termination bound in the infeasible case and the problem complexity, showing asymptotic convergence of the bounds in the latter.
The theoretical content has been supplemented by examples, illustrating the method, visualizing the possible effects of a local oracle, and demonstrating the ambiguous effect of known Lipschitz constants for the components of the constraint function.

While there exist optimization software like Gurobi~\cite{Gurobi801} and GloMIQO~\cite{misener2013glomiqo}
which successfully tackle quadratically-constrained problems,
(until recently) they struggled when facing general nonlinear constraints.
For the special case of the Euclidean norm, our approach enables such software to solve even more general problems by executing \labelcref{alg:nic} using them as an oracle.

As observed especially in the examples, the magnitude of the radius of the norm-induced cuts depends on the Lipschitz constant $L$ in the denominator and is crucial. 
If an evaluation of the constraint function is cheap, one can compute an approximation of the point-dependent Lipschitz constant and, thus, reduce its value.
Apart from that, applications with small $L$ seem reasonable. 
For example, the field of optimal control of power networks
include sine or cosine constraints 
with small $L$ and, thus, could be tackled by \labelcref{alg:nic}.
This gives rise to another interesting topic of research.

\begin{acronym}[NIC]
	\acro{nic}[NIC]{Norm-induced Cutting Method}
	\acro{lhs}[LHS]{left-hand side}
\end{acronym}

\subsection*{Acknowledgments}
We thank the Deutsche Forschungsgemeinschaft for their
support within project A05 in the
\emph{Sonderforschungsbereich/Transregio 154 Mathematical
  Modelling, Simulation and Optimization using the Example of Gas
  Networks}.
  
Further, we thank Martin Schmidt for a fruitful discussion about the distinction to the present article.

\printbibliography

@misc{Gurobi801,
  author = {{Gurobi Optimization, LLC}},
  year   = {2023},
  title  = {{Gurobi Optimizer Reference Manual}},
  url	 = {https://www.gurobi.com}
}

@article{SchmidtSirventWollner2022,
    AUTHOR = {Schmidt, Martin and Sirvent, Mathias and Wollner, Winnifried},
	TITLE = {The cost of not knowing enough: mixed-integer optimization with implicit {L}ipschitz nonlinearities},
	JOURNAL = {Optim. Lett.},
	FJOURNAL = {Optimization Letters},
	VOLUME = {16},
	YEAR = {2022},
	NUMBER = {5},
	PAGES = {1355--1372},
	DOI = {10.1007/s11590-021-01827-9}
}

@article{SchmidtSirventWollner2019,
    AUTHOR = {Schmidt, Martin and Sirvent, Mathias and Wollner, Winnifried},
	TITLE = {A decomposition method for {MINLP}s with {L}ipschitz continuous nonlinearities},
	JOURNAL = {Math. Program.},
	FJOURNAL = {Mathematical Programming},
	VOLUME = {178},
	YEAR = {2019},
	NUMBER = {1-2},
	PAGES = {449--483},
	DOI = {10.1007/s10107-018-1309-x}
}

@article {Gruebel2023,
	AUTHOR = {Gr\"ubel, Julia and Krug, Richard and Schmidt, Martin and Wollner, Winnifried},
	TITLE = {A successive linear relaxation method for {MINLP}s with multivariate {L}ipschitz continuous nonlinearities},
	JOURNAL = {J. Optim. Theory Appl.},
	FJOURNAL = {Journal of Optimization Theory and Applications},
	VOLUME = {198},
	YEAR = {2023},
	NUMBER = {3},
	PAGES = {1077--1117},
	DOI = {10.1007/s10957-023-02254-9}
}

@book{Koch2015,
	title={Evaluating gas network capacities},
	author={Koch, Thorsten and Hiller, Benjamin and Pfetsch, Marc E. and Schewe, Lars},
	year={2015},
	publisher={SIAM},
	address={Philadelphia},
	doi={10.1137/1.9781611973693}
}

@article{Westerlund1995,
	title = {An extended cutting plane method for solving convex MINLP problems},
	journal = {{Computers \& Chemical Engineering}},
	volume = {19},
	pages = {131-136},
	year = {1995},
	note = {European Symposium on Computer Aided Process Engineering 3--5},
	issn = {0098-1354},
	doi = {10.1016/0098-1354(95)87027-X},
	author = {Tapio Westerlund and Frank Pettersson}
}

@article{Westerlund1998,
	title = {An extended cutting plane method for a class of non-convex MINLP problems},
	journal = {{Computers \& Chemical Engineering}},
	volume = {22},
	number = {3},
	pages = {357-365},
	year = {1998},
	issn = {0098-1354},
	doi = {10.1016/S0098-1354(97)00000-8},
	author = {Tapio Westerlund and Hans Skrifvars and Iiro Harjunkoski and Ray Pörn}
}

@book{Horst2013,
	title={Global optimization: Deterministic approaches},
	author={Horst, Reiner and Tuy, Hoang},
	year={1996},
	publisher={Springer Berlin},
	address={Heidelberg},
	edition={Third},
	isbn={978-3-540-61038-0},
	doi={10.1007/978-3-662-03199-5}
}

@book{horst2013handbook,
     TITLE = {Handbook of global optimization},
	SERIES = {Nonconvex Optimization and its Applications},
	VOLUME = {2},
	EDITOR = {Horst, Reiner and Pardalos, Panos M.},
	PUBLISHER = {Kluwer Academic Publishers, Dordrecht},
	YEAR = {1995},
	PAGES = {xviii+880},
	ISBN = {0-7923-3120-6},
	DOI = {10.1007/978-1-4615-2025-2}
}

@article{Raissi2019,
    AUTHOR = {Raissi, M. and Perdikaris, P. and Karniadakis, G. E.},
	TITLE = {Physics-informed neural networks: a deep learning framework for solving forward and inverse problems involving nonlinear partial differential equations},
	JOURNAL = {J. Comput. Phys.},
	FJOURNAL = {Journal of Computational Physics},
	VOLUME = {378},
	YEAR = {2019},
	PAGES = {686--707},
	DOI = {10.1016/j.jcp.2018.10.045}
}

@article{Gugat2018,
    AUTHOR = {Gugat, Martin and Leugering, G\"unter and Martin, Alexander and Schmidt, Martin and Sirvent, Mathias and Wintergerst, David},
	TITLE = {Towards simulation based mixed-integer optimization with differential equations},
	JOURNAL = {Networks},
	FJOURNAL = {Networks},
	VOLUME = {72},
	YEAR = {2018},
	NUMBER = {1},
	PAGES = {60--83},
	DOI = {10.1002/net.21812}
}

@article{Tikhomirov1993,
    AUTHOR = {Kolmogorov, A. N. and Tihomirov, V. M.},
	TITLE = {{$\varepsilon $}-entropy and {$\varepsilon $}-capacity of sets in functional space},
	JOURNAL = {Amer. Math. Soc. Transl. (2)},
	FJOURNAL = {Amer. Math. Soc. Transl. (2)},
	VOLUME = {17},
	YEAR = {1961},
	PAGES = {277--364}
}

@misc{Han2021,
	author        = {Han, Yanjun},
	title         = {Lecture notes in Informatic-theoretic Lower Bounds in Data Science (Lecture 12)},
	month         = {5},
	year          = {2021},
	publisher={Standford University},
	url			={https://web.stanford.edu/class/ee378c/scribe_lec12.pdf}
}

@article{Audet2004,
    AUTHOR = {Audet, Charles and Dennis, Jr., J. E.},
	TITLE = {A pattern search filter method for nonlinear programming
	without derivatives},
	JOURNAL = {SIAM J. Optim.},
	FJOURNAL = {SIAM Journal on Optimization},
	VOLUME = {14},
	YEAR = {2004},
	NUMBER = {4},
	PAGES = {980--1010},
	ISSN = {1052-6234,1095-7189},
	DOI = {10.1137/S105262340138983X}
}

@article{Lewis1999,
    AUTHOR = {Lewis, Robert Michael and Torczon, Virginia},
	TITLE = {Pattern search algorithms for bound constrained minimization},
	NOTE = {Dedicated to John E. Dennis, Jr., on his 60th birthday},
	JOURNAL = {SIAM J. Optim.},
	FJOURNAL = {SIAM Journal on Optimization},
	VOLUME = {9},
	YEAR = {1999},
	NUMBER = {4},
	PAGES = {1082--1099},
	DOI = {10.1137/S1052623496300507}
}

@article{Audet2006,
    AUTHOR = {Audet, Charles and Dennis, Jr., J. E.},
	TITLE = {Mesh adaptive direct search algorithms for constrained
	optimization},
	JOURNAL = {SIAM J. Optim.},
	FJOURNAL = {SIAM Journal on Optimization},
	VOLUME = {17},
	YEAR = {2006},
	NUMBER = {1},
	PAGES = {188--217},
	DOI = {10.1137/040603371}
}

@inproceedings{Berghen2005,
	title={CONDOR, a parallel, direct, constrained optimizer for high-computing-load, black box objective functions},
	author={Berghen, Frank Vanden},
	booktitle={Proceeedings of the third MIT conference on Computational Fluid and Solid Mechanics, Elsevier},
	year={2005}
}

@book{Conn2009,
    AUTHOR = {Conn, Andrew R. and Scheinberg, Katya and Vicente, Luis N.},
	TITLE = {Introduction to derivative-free optimization},
	SERIES = {MPS/SIAM Series on Optimization},
	VOLUME = {8},
	PUBLISHER = {Society for Industrial and Applied Mathematics (SIAM),
	Philadelphia, PA; Mathematical Programming Society (MPS),
	Philadelphia, PA},
	YEAR = {2009},
	PAGES = {xii+277},
	ISBN = {978-0-898716-68-9},
	DOI = {10.1137/1.9780898718768}
}

@book{Vavasis1991,
    AUTHOR = {Vavasis, Stephen A.},
	TITLE = {Nonlinear optimization},
	SERIES = {International Series of Monographs on Computer Science},
	VOLUME = {8},
	NOTE = {Complexity issues},
	PUBLISHER = {The Clarendon Press, Oxford University Press, New York},
	YEAR = {1991},
	PAGES = {xiv+165},
	ISBN = {0-19-507208-1}
}

@book{Nemirovsky1983,
	title={Problem complexity and method efficiency in optimization},
	author={Nemirovskij, Arkadij Semenovi{\v{c}} and Yudin, David Borisovich},
	year={1983},
	publisher={Wiley-Interscience},
	address={Norwich}
}

@phdthesis{Berthold2014,
	author = {Timo Berthold},
	pages = {366},
	title = {Heuristic algorithms in global MINLP solvers},
	url = {http://www.zib.de/berthold/Berthold2014.pdf},
	year = {2014},
	bdsk-url-1 = {http://www.zib.de/berthold/Berthold2014.pdf}
}

@book{Rudin1976,
    AUTHOR = {Rudin, Walter},
	TITLE = {Principles of mathematical analysis},
	SERIES = {International Series in Pure and Applied Mathematics},
	EDITION = {Third},
	PUBLISHER = {McGraw-Hill Book Co., New York-Auckland-D\"usseldorf},
	YEAR = {1976},
	PAGES = {x+342}
}

@article{Piyavskii1972,
	author = {S.A. Piyavskii},
	doi = {10.1016/0041-5553(72)90115-2},
	journal = {USSR Computational Mathematics and Mathematical Physics},
	number = {4},
	pages = {57--67},
	title = {An algorithm for finding the absolute extremum of a function},
	volume = {12},
	year = {1972},
}

@article{bertsimas2010robust,
    AUTHOR = {Bertsimas, Dimitris and Nohadani, Omid and Teo, Kwong Meng},
	TITLE = {Robust optimization for unconstrained simulation-based problems},
	JOURNAL = {Oper. Res.},
	FJOURNAL = {Operations Research},
	VOLUME = {58},
	YEAR = {2010},
	NUMBER = {1},
	PAGES = {161--178},
	DOI = {10.1287/opre.1090.0715}
}

@article{ozturk2019optimal,
	author = {\"{O}zt\"{u}rk, Berk and Saab, Ali},
	title = {Optimal Aircraft Design Decisions Under Uncertainty Using Robust Signomial Programming},
	journal = {AIAA Journal},
	volume = {59},
	number = {5},
	pages = {1773--1785},
	year = {2021},
	doi = {10.2514/1.J058724},
}

@article{egea2007,
	author = {Egea, Jose A. and Vries, Dirk and Alonso, Antonio A. and Banga, Julio R.},
	title = {Global Optimization for Integrated Design and Control of Computationally Expensive Process Models},
	journal = {Ind. Eng. Chem. Res.},
	fjournal = {{Industrial \& Engineering Chemistry Research}},
	volume = {46},
	number = {26},
	pages = {9148--9157},
	year = {2007},
	doi = {10.1021/ie0705094}
}

@article{bartholomew2002,
    AUTHOR = {Bartholomew-Biggs, M. C. and Parkhurst, S. C. and Wilson, S. P.},
	TITLE = {Using {DIRECT} to solve an aircraft routing problem},
	JOURNAL = {Comput. Optim. Appl.},
	FJOURNAL = {Computational Optimization and Applications. An International Journal},
	VOLUME = {21},
	YEAR = {2002},
	NUMBER = {3},
	PAGES = {311--323},
	DOI = {10.1023/A:1013729320435}
}

@article{boukouvala2016,
    AUTHOR = {Boukouvala, Fani and Misener, Ruth and Floudas, Christodoulos A.},
	TITLE = {Global optimization advances in mixed-integer nonlinear programming, {MINLP}, and constrained derivative-free optimization, {CDFO}},
	JOURNAL = {European J. Oper. Res.},
	FJOURNAL = {European Journal of Operational Research},
	VOLUME = {252},
	YEAR = {2016},
	NUMBER = {3},
	PAGES = {701--727},
	DOI = {10.1016/j.ejor.2015.12.018}
}

@article{mistry2021,
    AUTHOR = {Mistry, Miten and Letsios, Dimitrios and Krennrich, Gerhard and Lee, Robert M. and Misener, Ruth},
	TITLE = {Mixed-integer convex nonlinear optimization with gradient-boosted trees embedded},
	JOURNAL = {INFORMS J. Comput.},
	FJOURNAL = {INFORMS Journal on Computing},
	VOLUME = {33},
	YEAR = {2021},
	NUMBER = {3},
	PAGES = {1103--1119},
	DOI = {10.1287/ijoc.2020.0993}
}

@inproceedings{tsay2021,
	author = {Tsay, Calvin and Kronqvist, Jan and Thebelt, Alexander and Misener, Ruth},
	booktitle = {Advances in Neural Information Processing Systems},
	editor = {M. Ranzato and A. Beygelzimer and Y. Dauphin and P.S. Liang and J. Wortman Vaughan},
	pages = {3068--3080},
	publisher = {Curran Associates, Inc.},
	title = {Partition-Based Formulations for Mixed-Integer Optimization of Trained ReLU Neural Networks},
	volume = {34},
	year = {2021}
}

@inproceedings{kronqvist2021,
	author="Kronqvist, Jan and Misener, Ruth and Tsay, Calvin",
	editor="Stuckey, Peter J.",
	title="Between Steps: Intermediate Relaxations Between Big-M and Convex Hull Formulations",
	booktitle="Integration of Constraint Programming, Artificial Intelligence, and Operations Research",
	year="2021",
	publisher="Springer International Publishing",
	address="Cham",
	pages="299--314",
	isbn="978-3-030-78230-6",
	doi={10.1007/978-3-030-78230-6_19}
}

@article{ceccon2022,
    AUTHOR = {Ceccon, Francesco and Jalving, Jordan and Haddad, Joshua and Thebelt, Alexander and Tsay, Calvin and Laird, Carl D. and Misener, Ruth},
	TITLE = {{O{MLT}: optimization \& machine learning toolkit}},
	JOURNAL = {J. Mach. Learn. Res.},
	FJOURNAL = {Journal of Machine Learning Research},
	VOLUME = {23},
	NUMBER = {349},
	YEAR = {2022},
	PAGES = {1--8},
}

@inproceedings{turner2023,
	author="Turner, Mark and Chmiela, Antonia and Koch, Thorsten and Winkler, Michael",
	editor="Tack, Guido",
	title="PySCIPOpt-ML: Embedding Trained Machine Learning Models into Mixed-Integer Programs",
	booktitle="Integration of Constraint Programming, Artificial Intelligence, and Operations Research",
	year="2025",
	publisher="Springer Nature Switzerland",
	address="Cham",
	pages="218--234",
	isbn="978-3-031-95976-9",
	doi={10.1007/978-3-031-95976-9_14}
}

@misc{hildebrand2024,
	title={Complexity of Integer Programming in Reverse Convex Sets via Boundary Hyperplane Cover}, 
	author={Robert Hildebrand and Adrian G{\"o}{\ss}},
	year={2024},
	eprint={2409.05308},
	archivePrefix={arXiv},
	primaryClass={math.OC}
}

@article {gauvin1982,
	AUTHOR = {Gauvin, Jacques and Dubeau, Fran\c cois},
	TITLE = {Differential properties of the marginal function in mathematical programming},
	NOTE = {Optimality and stability in mathematical programming},
	JOURNAL = {Math. Programming Stud.},
	FJOURNAL = {Mathematical Programming Study},
	NUMBER = {19},
	YEAR = {1982},
	PAGES = {101--119},
	DOI = {10.1007/bfb0120984},
}

@article {fazlyab-robustnessNNs-2022,
    AUTHOR = {Fazlyab, Mahyar and Morari, Manfred and Pappas, George J.},
	TITLE = {Safety verification and robustness analysis of neural networks via quadratic constraints and semidefinite programming},
	JOURNAL = {IEEE Trans. Automat. Control},
	FJOURNAL = {Institute of Electrical and Electronics Engineers.
	Transactions on Automatic Control},
	VOLUME = {67},
	YEAR = {2022},
	NUMBER = {1},
	PAGES = {1--15},
	DOI = {10.1109/tac.2020.3046193}
}

@article {pauli-NNLipschitzBounds-2022,
	AUTHOR = {Pauli, Patricia and Koch, Anne and Berberich, Julian and Kohler, Paul and Allg\"ower, Frank},
	TITLE = {Training robust neural networks using {L}ipschitz bounds},
	JOURNAL = {IEEE Control Syst. Lett.},
	FJOURNAL = {IEEE Control Systems Letters},
	VOLUME = {6},
	YEAR = {2022},
	PAGES = {121--126},
	DOI = {10.1109/lcsys.2021.3050444}
}

@article{Kalczynski2021,
    AUTHOR = {Kalczynski, Pawel and Drezner, Zvi},
	TITLE = {Extremely non-convex optimization problems: the case of the multiple obnoxious facilities location},
	JOURNAL = {Optim. Lett.},
	FJOURNAL = {Optimization Letters},
	VOLUME = {16},
	YEAR = {2022},
	NUMBER = {4},
	PAGES = {1153--1166},
	DOI = {10.1007/s11590-021-01731-2}
}

@Inbook{Jacobsen2009,
	author="Jacobsen, Stephen E.",
	editor="Floudas, Christodoulos A.
	and Pardalos, Panos M.",
	title="Reverse convex optimization",
	bookTitle="Encyclopedia of Optimization",
	year="2009",
	publisher="Springer US",
	address="Boston, MA",
	pages="3295--3300",
	isbn="978-0-387-74759-0",
	doi={10.1007/978-0-387-74759-0_564}
}

@book {Cui2022,
	AUTHOR = {Cui, Ying and Pang, Jong-Shi},
	TITLE = {Modern nonconvex nondifferentiable optimization},
	SERIES = {MOS-SIAM Series on Optimization},
	VOLUME = {29},
	PUBLISHER = {Society for Industrial and Applied Mathematics (SIAM),
	Philadelphia, PA; Mathematical Optimization Society,
	Philadelphia, PA},
	YEAR = {2022},
	PAGES = {xx+756},
	ISBN = {978-1-611976-73-1},
	MRCLASS = {90-02 (49-02 49J52 49J53 90Cxx)},
	MRNUMBER = {4359983},
	MRREVIEWER = {Chayne\ Planiden},
}

@book {Ortega2000,
	AUTHOR = {Ortega, J. M. and Rheinboldt, W. C.},
	TITLE = {Iterative solution of nonlinear equations in several variables},
	SERIES = {Classics in Applied Mathematics},
	VOLUME = {30},
	NOTE = {Reprint of the 1970 original},
	PUBLISHER = {Society for Industrial and Applied Mathematics (SIAM), Philadelphia, PA},
	YEAR = {2000},
	PAGES = {xxvi+572},
	ISBN = {0-89871-461-3},
	DOI = {10.1137/1.9780898719468}
}

@article {Hoffman1952,
	AUTHOR = {Hoffman, Alan J.},
	TITLE = {On approximate solutions of systems of linear inequalities},
	JOURNAL = {J. Research Nat. Bur. Standards},
	FJOURNAL = {J. Research Nat. Bur. Standards},
	VOLUME = {49},
	YEAR = {1952},
	PAGES = {263--265}
}

@book{Still2018,
	author = {Still, Georg},
	title = {Lectures on Parametric Optimization: An Introduction},
	publisher = {University of Twente},
	address = {The Netherlands},
	year = {2018},
	note = {\url{https://optimization-online.org/2018/04/6587/}}
}

@article{wood1996estimation,
  title     = {Estimation of the Lipschitz constant of a function},
  author    = {Wood, Graham R and Zhang, BP},
  journal = {J. Glob. Optim.},
  fjournal   = {Journal of Global Optimization},
  volume    = {8},
  pages     = {91--103},
  year      = {1996},
  publisher = {Springer},
  doi = {10.1007/BF00229304}
}

@article{latorre2020lipschitz,
      title={Lipschitz constant estimation of Neural Networks via sparse polynomial optimization}, 
	author={Fabian Latorre and Paul Rolland and Volkan Cevher},
	year={2020},
	eprint={2004.08688},
	archivePrefix={arXiv},
	primaryClass={cs.LG}
}

@inproceedings{virmaux2018lipschitz,
	author = {Virmaux, Aladin and Scaman, Kevin},
	booktitle = {Advances in Neural Information Processing Systems},
	editor = {S. Bengio and H. Wallach and H. Larochelle and K. Grauman and N. Cesa-Bianchi and R. Garnett},
	pages = {},
	publisher = {Curran Associates, Inc.},
	title = {Lipschitz regularity of deep neural networks: analysis and efficient estimation},
	volume = {31},
	year = {2018}
}

@inproceedings{jordan2020exactly,
	author = {Jordan, Matt and Dimakis, Alexandros G},
	booktitle = {Advances in Neural Information Processing Systems},
	editor = {H. Larochelle and M. Ranzato and R. Hadsell and M.F. Balcan and H. Lin},
	pages = {7344--7353},
	publisher = {Curran Associates, Inc.},
	title = {Exactly Computing the Local Lipschitz Constant of ReLU Networks},
	volume = {33},
	year = {2020}
}

@inproceedings{fazlyab2019efficient,
	author = {Fazlyab, Mahyar and Robey, Alexander and Hassani, Hamed and Morari, Manfred and Pappas, George},
	booktitle = {Advances in Neural Information Processing Systems},
	editor = {H. Wallach and H. Larochelle and A. Beygelzimer and F. d\textquotesingle Alch\'{e}-Buc and E. Fox and R. Garnett},
	pages = {},
	publisher = {Curran Associates, Inc.},
	title = {Efficient and Accurate Estimation of Lipschitz Constants for Deep Neural Networks},
	volume = {32},
	year = {2019}
}

@book{sergeyev2017deterministic,
    AUTHOR = {Sergeyev, Yaroslav D. and Kvasov, Dmitri E.},
	TITLE = {Deterministic global optimization},
	SERIES = {SpringerBriefs in Optimization},
	NOTE = {An introduction to the diagonal approach},
	PUBLISHER = {Springer, New York},
	YEAR = {2017},
	PAGES = {x+136},
	ISBN = {978-1-4939-7199-2},
	DOI = {10.1007/978-1-4939-7199-2}
}

@book{floudas2013deterministic,
  title     = {Deterministic global optimization: theory, methods and applications},
  author    = {Floudas, Christodoulos A},
  volume    = {37},
  year      = {2013},
  publisher = {{Springer Science \& Business Media}}
}

@book{bagirov2014introduction,
    AUTHOR = {Bagirov, Adil and Karmitsa, Napsu and M\"akel\"a, Marko M.},
	TITLE = {Introduction to nonsmooth optimization},
	NOTE = {Theory, practice and software},
	PUBLISHER = {Springer, Cham},
	YEAR = {2014},
	PAGES = {xviii+372},
	ISBN = {978-3-319-08114-4},
	DOI = {10.1007/978-3-319-08114-4}
	}

@article {Molan2023,
	AUTHOR = {Molan, Ioana and Schmidt, Martin},
	TITLE = {Using neural networks to solve linear bilevel problems with unknown lower level},
	JOURNAL = {Optim. Lett.},
	FJOURNAL = {Optimization Letters},
	VOLUME = {17},
	YEAR = {2023},
	NUMBER = {5},
	PAGES = {1083--1103},
	DOI = {10.1007/s11590-022-01958-7}
}

@article{ahmed2020combining,
    AUTHOR = {Ahmed, Mohamed Osama and Vaswani, Sharan and Schmidt, Mark},
	TITLE = {Combining {B}ayesian optimization and {L}ipschitz
	optimization},
	JOURNAL = {Mach. Learn.},
	FJOURNAL = {Machine Learning},
	VOLUME = {109},
	YEAR = {2020},
	NUMBER = {1},
	PAGES = {79--102},
	ISSN = {0885-6125,1573-0565},
	MRCLASS = {90C56 (62F15)},
	MRNUMBER = {4053468},
	DOI = {10.1007/s10994-019-05833-y},
}

@inproceedings{malherbe2017global,
	title = 	 {Global optimization of {L}ipschitz functions},
	author =       {C{\'e}dric Malherbe and Nicolas Vayatis},
	booktitle = 	 {Proceedings of the 34th International Conference on Machine Learning},
	pages = 	 {2314--2323},
	year = 	 {2017},
	editor = 	 {Precup, Doina and Teh, Yee Whye},
	volume = 	 {70},
	series = 	 {Proceedings of Machine Learning Research},
	month = 	 {8},
	publisher =    {PMLR},
	url = 	 {https://proceedings.mlr.press/v70/malherbe17a.html}
}

@inbook{paulavivcius2013simplicial,
  title     = {Simplicial partitions in global optimization},
  booktitle = {Simplicial Global Optimization},
  pages     = {1--19},
  AUTHOR = {Paulavi\v cius, Remigijus and \v Zilinskas, Julius},
  SERIES = {SpringerBriefs in Optimization},
  PUBLISHER = {Springer, New York},
  YEAR = {2014},
  PAGES = {x+137},
  ISBN = {978-1-4614-9093-7},
  DOI = {10.1007/978-1-4614-9093-7}
}

@article{liuzzi2010partition,
    AUTHOR = {Liuzzi, Giampaolo and Lucidi, Stefano and Piccialli, Veronica},
	TITLE = {A partition-based global optimization algorithm},
	JOURNAL = {J. Global Optim.},
	FJOURNAL = {Journal of Global Optimization},
	VOLUME = {48},
	YEAR = {2010},
	NUMBER = {1},
	PAGES = {113--128},
	DOI = {10.1007/s10898-009-9515-y}
}

@article{strongin2020global,
  title     = {Global optimization method with dual Lipschitz constant estimates for problems with non-convex constraints},
  author    = {Strongin, Roman and Barkalov, Konstantin and Bevzuk, Semen},
  journal = {Soft. Comput.},
  fjournal   = {Soft Computing},
  volume    = {24},
  number    = {16},
  pages     = {11853--11865},
  year      = {2020},
  publisher = {Springer},
  doi = {10.1007/s00500-020-05078-1}
}

@article{di2016direct,
    AUTHOR = {Di Pillo, G. and Liuzzi, G. and Lucidi, S. and Piccialli, V. and Rinaldi, F.},
	TITLE = {A {DIRECT}-type approach for derivative-free constrained global optimization},
	JOURNAL = {Comput. Optim. Appl.},
	FJOURNAL = {Computational Optimization and Applications},
	VOLUME = {65},
	YEAR = {2016},
	NUMBER = {2},
	PAGES = {361--397},
	DOI = {10.1007/s10589-016-9876-3}
}

@article{tuy1987convex,
    AUTHOR = {Tuy, H.},
	TITLE = {Convex programs with an additional reverse convex constraint},
	JOURNAL = {J. Optim. Theory Appl.},
	FJOURNAL = {Journal of Optimization Theory and Applications},
	VOLUME = {52},
	YEAR = {1987},
	NUMBER = {3},
	PAGES = {463--486},
	DOI = {10.1007/BF00938217}
}

@article{yamada2000inner,
    AUTHOR = {Yamada, S. and Tanino, T. and Inuiguchi, M.},
	TITLE = {Inner approximation method for a reverse convex programming problem},
	JOURNAL = {J. Optim. Theory Appl.},
	FJOURNAL = {Journal of Optimization Theory and Applications},
	VOLUME = {107},
	YEAR = {2000},
	NUMBER = {2},
	PAGES = {355--389},
	DOI = {10.1023/A:1026456730792}
}

@article{bunin2016extended,
    AUTHOR = {Bunin, Gene A.},
	TITLE = {Extended reverse-convex programming: an approximate enumeration approach to global optimization},
	JOURNAL = {J. Global Optim.},
	FJOURNAL = {Journal of Global Optimization},
	VOLUME = {65},
	YEAR = {2016},
	NUMBER = {2},
	PAGES = {191--229},
	DOI = {10.1007/s10898-015-0352-x}
}

@article{drezner2020solving,
     AUTHOR = {Drezner, Zvi and Kalczynski, Pawel},
	 TITLE = {Solving nonconvex nonlinear programs with reverse convex constraints by sequential linear programming},
	 JOURNAL = {Int. Trans. Oper. Res.},
	 FJOURNAL = {International Transactions in Operational Research},
	 VOLUME = {27},
	 YEAR = {2020},
	 NUMBER = {3},
	 PAGES = {1320--1342},
	 DOI = {10.1111/itor.12736}
 }

@article{thoai1988modified,
    AUTHOR = {Thoai, N. V.},
	TITLE = {A modified version of {T}uy's method for solving d.c.\ programming problems},
	JOURNAL = {Optimization},
	FJOURNAL = {Optimization. A Journal of Mathematical Programming and
	Operations Research},
	VOLUME = {19},
	YEAR = {1988},
	NUMBER = {5},
	PAGES = {665--674},
	DOI = {10.1080/02331938808843381}
}

@article{beck2017branch,
    AUTHOR = {Beck, Amir and Pan, Dror},
	TITLE = {A branch and bound algorithm for nonconvex quadratic optimization with ball and linear constraints},
	JOURNAL = {J. Global Optim.},
	FJOURNAL = {Journal of Global Optimization},
	VOLUME = {69},
	YEAR = {2017},
	NUMBER = {2},
	PAGES = {309--342},
	DOI = {10.1007/s10898-017-0521-1}
}

@inproceedings{bienstock2014polynomial,
    AUTHOR = {Bienstock, Daniel and Michalka, Alexander},
	TITLE = {Polynomial solvability of variants of the trust-region subproblem},
	BOOKTITLE = {Proceedings of the {T}wenty-{F}ifth {A}nnual {ACM}-{SIAM} {S}ymposium on {D}iscrete {A}lgorithms},
	PAGES = {380--390},
	PUBLISHER = {ACM, New York},
	YEAR = {2014},
	ISBN = {978-1-611973-38-9},
	DOI = {10.1137/1.9781611973402.28}
}

@article{misener2012global,
     AUTHOR = {Misener, Ruth and Floudas, Christodoulos A.},
	 TITLE = {Global optimization of mixed-integer quadratically-constrained quadratic programs ({MIQCQP}) through piecewise-linear and edge-concave relaxations},
	 JOURNAL = {Math. Program.},
	 FJOURNAL = {Mathematical Programming},
	 VOLUME = {136},
	 YEAR = {2012},
	 NUMBER = {1},
	 PAGES = {155--182},
	 DOI = {10.1007/s10107-012-0555-6}
 }

@article{misener2013glomiqo,
    AUTHOR = {Misener, Ruth and Floudas, Christodoulos A.},
	TITLE = {Glo{MIQO}: global mixed-integer quadratic optimizer},
	JOURNAL = {J. Global Optim.},
	FJOURNAL = {Journal of Global Optimization},
	VOLUME = {57},
	YEAR = {2013},
	NUMBER = {1},
	PAGES = {3--50},
	DOI = {10.1007/s10898-012-9874-7}
}
\newpage

\appendix

\section{Reformulation of norm-induced cuts with 1-/$\infty$-norm}
\label{sec:reform-cuts}

If the norm-induced cuts \labelcref{eq:def-nic} are defined with respect to the 1-norm or the infinity norm (over $\RR$ or $\QQ$), then we can formulate the relaxed problem as a mixed-integer linear problem using binary variables. 
For simplicity, consider a cut of the form 
$$
\|\vx - \va\|_p \geq b,
$$
where $p=1$ or $\infty$.\\
\textbf{Case $p=1$:} The cut then is equivalent to 
	$$
	\sum_{i=1}^{n}|x_i - a_i| \geq b.
	$$
	By introducing continuous variables $y_i$ and binary variables $z_i$ this constraint can be equivalently expressed as 
	$$
	\begin{aligned}
		&\sum_{i=1}^{n} y_i \geq b, \\
		&y_i \geq (x_i - a_i) - M z_i,\\
		&y_i \leq (x_i - a_i) + M z_i,\\
		&y_i \geq -(x_i - a_i) - M (1-z_i),\\
		&y_i \leq -(x_i - a_i) + M (1-z_i),\\
		&z_i \in \{0,1\},\\
		&y_i \geq 0,
	\end{aligned}
	$$
	where all except for the first inequality are given for all $i \in [n]$ and $M > 0$ large.
	In order to show this equivalence of both models, on the one hand, observe that if $z_i = 0$ the respective constraints simplify to 
	$$
	\begin{aligned}
		&y_i \geq (x_i - a_i), \\
		&y_i \leq (x_i - a_i), \\
		&y_i \geq -(x_i - a_i) - M, \\
		&y_i \leq -(x_i - a_i) + M, 
	\end{aligned}
	$$
	for all $i \in [n]$, where the first two constraints ensure that $y_i = x_i - a_i$ and the second two constraints impose no restriction on $y_i$ for $M > 0$ sufficiently large.
	On the other hand, if $z_i = 1$, then we have 
	$$
	\begin{aligned}
		&y_i \geq (x_i - a_i) - M,\\
		&y_i \leq (x_i - a_i) + M,\\
		&y_i \geq -(x_i - a_i), \\
		&y_i \leq -(x_i - a_i),  
	\end{aligned}
	$$
	for all $i \in [n]$, so $y_i = -(x_i - a_i)$ is enforced.
	The last set of constraints $y_i \geq 0$ then ensures that variable $y_i$ is assigned a positive value, resulting in $y_i = \abs{x_i - a_i}$.
	In conclusion, $\sum_{j = 1}^n y_i \geq b$ is equivalent to the cut for the 1-norm.\\
\textbf{Case $p=\infty$:} The cut then is equivalent to
	$$
	\max_{i=1,\dots,n}|x_i - a_i| \geq b.
	$$
	In contrast to the first case, we have to introduce continuous variables $w_i$ and binary variables $u_i$ \textit{additionally} in order to cope with the maximum formulation. 
	Then the cut can be equivalently expressed as 
	$$
	\begin{aligned}
		&\sum_{i=1}^{n} w_i \geq b, \\
		&w_i \leq y_i + M(1 - u_i),\\
		&0 \leq w_i \leq M u_i,\\
		&\sum_{i=1}^{n}u_i = 1,\\
		&\sum_{j=1}^{n}w_j \geq y_i \\
		&y_i \geq (x_i - a_i) - M z_i,\\
		&y_i \leq (x_i - a_i) + M z_i, \\
		&y_i \geq -(x_i - a_i) - M (1-z_i),\\
		&y_i \leq -(x_i - a_i) + M (1-z_i),\\
		&z_i \in \{0,1\},\\
		&u_i \in \{0,1\},\\
		&y_i \geq 0,
	\end{aligned}
	$$
	for all $i \in [n]$.
	As in the case before, the variables $y_i$, $z_i$ and respective constraints ensure that $y_i = \abs{x_i - a_i}$.
	For the maximum, observe that $u_{i^*}$ is enforced to be non-zero, \ie, 1, for  exactly one $i^* \in [n]$. 
	This also implies that $u_i$ and due to the big-$M$ formulation $w_i$ are zero for $i \in [n]\setminus\{i^*\}$.
	Regarding $w_{i^*}$, we note that the upper bound supposed to be taking effect is by the second inequality, reducing to $w_{i^*} \leq y_{i^*}$.
	That is, exactly one $w_i$ is non-zero and upper bounded by $y_i$.
	In addition with $\sum_{j = 1}^n w_j \geq y_i$ for $i \in [n]$ we conclude that $w_{i^*}$ takes exactly the maximum of all $y_i$, all other $w_i$ being zero. 
	This shows the equivalence of modeling.
	
Note that the value of $M > 0$ can always be bounded by a diameter of the feasible region or an upper bound to this value, respectively.

\section{Examples for formula~\labelcref{ieq:volume}}
\label{sec:volume_ex}
Let $L > 0$ be the (upper bound to the) Lipschitz constant and $\delta > 0$ such that $\delta/L < 1$. 
We calculate the examples as appearing in~\Cref{tab:iter_bounds}. 
For the sake of clarity, we define the unit ball $B = B_1$ with respect to the considered norm.
\begin{itemize}
	\item[a)] Let $\Omega_1 = [\va, \vb]$ be an $n$-dimensional box with $\va, \, \vb \in \reals^n$ and $a_j < b_j$ for $j = 1,\dots,n$.
	We consider the maximum-norm, \ie, $\norm[\infty]{\vx} = \max_{i \in [n]} \abs{x_i}$ for $\vx \in \reals^n$ and $[n] \coloneqq \{1, \dots, n\}$.
	
	In this case, we have $B = [-1, 1]^n$ and, thus, $\vol(B) = 2^n$. 
	It follows, $\vol((\delta/L) B/2) = (\delta/L)^n$ and
	\begin{equation*}
		\Omega_1 + (\delta/L) B/2 = \bigtimes_{j = 1}^n \left[a_j - \frac{\delta}{2L},\, b_j + \frac{\delta}{2L}\right].
	\end{equation*}
	Therefore, $\vol(\Omega_1 + (\delta/L) B/2) = \prod_{j = 1}^n (b_j - a_j + \delta/L)$ and we conclude
	\begin{equation*}
		T = \frac{\vol(\Omega_1 + (\delta/L) B/2)}{\vol((\delta/L) B/2)} = \prod_{j = 1}^n \left[\frac{L}{\delta} (b_j - a_j) + 1\right].
	\end{equation*}

	\item[b)] Let us consider the maximum-norm again and $\Omega_2 = \Omega_1 \cap \integers^n$ to be the integral (lattice) points in $\Omega_1$.  
	Then, we can calculate the number of lattice points as 
	\begin{equation*}
		\abs{\Omega_2} = \prod_{j = 1}^n(\lfloor b_j \rfloor - \lceil a_j \rceil + 1).
	\end{equation*}
	Since we assumed $\delta/L < 1$ in the beginning, it holds that
	\begin{equation*}
		(\{\vect{z}_1\} + (\delta/L)B/2) \cap (\{\vect{z}_2\} + (\delta/L)B/2) = \emptyset, 
	\end{equation*}
 	for $\vect{z}_1, \, \vect{z}_2 \in \Omega_2, \, \vect{z}_1 \neq \vect{z}_2$.
	Therefore, we can derive the volume ${\vol(\Omega_2 + (\delta/L) B/2) = \abs{\Omega_2} \vol((\delta/L)B/2)}$ and, thus,
	\begin{equation*}
		T = \frac{\vol(\Omega_2 + (\delta/L) B/2)}{\vol((\delta/L)B/2)} = \frac{\abs{\Omega_2} \vol((\delta/L)B/2)}{\vol((\delta/L)B/2)} = \abs{\Omega_2}.
	\end{equation*}
	
	\item[c)] Here, we consider the Euclidean or 2-norm, \ie, $\norm[2]{\vx} = \sqrt{\left(\sum_{j = 1}^n x_j^2 \right)}$.
	For some radius $r > 0$, we have $B_r$ and can derive from standard literature that 
	\begin{equation*}
		\vol(B_r) = \frac{\pi^{n/2}}{\Gamma(\frac{n}{2} + 1)} r^n = \vol(B)r^n,
	\end{equation*}
	where $\Gamma(\cdot)$ denotes the gamma function.
	Now, for $D > 0$, let $\Omega_3 = B_D$ and it follows that
	\begin{equation*}
		\vol(\Omega_3 + (\delta/L)B/2) = \vol(B_{D + (\delta/(2L))}) = \vol(B) (D + (\delta/(2L)))^n.
	\end{equation*}
	Therefore, we can conclude
	\begin{equation*}
		T = \frac{\vol(B_D + (\delta/L)B/2)}{\vol((\delta/L)B/2)} = \frac{(D + (\delta/(2L)))^n}{(\delta/(2L))^n} = \left(\frac{2L}{\delta}D + 1\right)^n.
	\end{equation*}
\end{itemize}

\end{document}